\newcommand{\ignore}[1]{}

\ignore{
\documentclass[a4paper, UKenglish, cleveref, autoref, thm-restate]{lipics-v2021}
\usepackage[dvipsnames]{xcolor}
\usepackage{cite}
\RequirePackage{hyperref}
\hypersetup{colorlinks=true, urlcolor=Blue, citecolor=Green, linkcolor=BrickRed, breaklinks, unicode}
\usepackage{amsmath,amssymb,amsthm}
\pdfminorversion=7
\bibliographystyle{abbrv}
\ccsdesc[500]{Computational biology~Computational genomics}
\ccsdesc[500]{Discrete mathematics~Graph theory}
\author{Sagi Snir}{University of Haifa, Israel}{Department of Evolutionary Biology, University of Haifa, Haifa 3498838, Israel}{https://orcid.org/0000-0001-5833-7659}{}
\author{Raphael Yuster}{University of Haifa, Israel}{raphy@math.haifa.ac.il}{https://orcid.org/0000-0001-7550-6506}{}
\authorrunning{S. Snir and R. Yuster}
\Copyright{Sagi Snir and Raphael Yuster}
\keywords{cut, separation, trees, taxa}
\category{} 
\nolinenumbers 
\hideLIPIcs  
}

\documentclass[11pt]{article}

\usepackage{amsmath,amssymb,amsthm,setspace,graphicx,array,tikz,url,blkarray,easyReview,bm}
\usepackage[text={6.5in,8.4in}, top=1.3in]{geometry}
\usepackage{hyperref,lineno} \nolinenumbers
\newtheorem{theorem}{Theorem}[section]
\newtheorem{proposition}[theorem]{Proposition}
\newtheorem{lemma}[theorem]{Lemma}

\newtheorem{conjecture}[theorem]{Conjecture}

\renewcommand{\L}{{\mathcal L}}
\newcommand{\T}{{\mathcal T}}
\newcommand{\Q}{{\mathcal Q}}
\newcommand{\e}{{\mathbf e}}  
\renewcommand{\b}{{\mathbf b}}

\author{Sagi Snir
	\thanks{Department of Evolutionary Biology, University of Haifa, Haifa 3498838, Israel (ssagi@research.haifa.ac.il).}
	\and
	Raphael Yuster
	\thanks{Department of Mathematics, University of Haifa, Haifa 3498838, Israel
		(raphy@math.haifa.ac.il).}
} 

\begin{document}

\title{Simultaneous separation in bounded degree trees 
}

\date{}

\maketitle 
	
\begin{abstract}
	It follows from a classical result of Jordan that every tree with maximum degree at most $r$ containing a vertex set labeled by $[n]$, has a single-edge cut which separates two subsets $A,B \subset [n]$ for which $\min\{|A|,|B|\} \ge (n-1)/r$. Motivated by the tree dissimilarity problem in phylogenetics, we consider the case of separating vertex sets of {\em several} trees: Given $k$ trees with maximum degree at most $r$, containing a common vertex set labeled by $[n]$, we ask for a single-edge cut in each tree which maximizes $min\{|A|,|B|\}$ where $A,B \subset [n]$ are separated by the corresponding cut at each tree. Denoting this maximum by $f(r,k,n)$ and
	considering the limit $f(r,k) = \lim_{n \rightarrow \infty} f(r,k,n)/n$ (which is shown to always exist)
	we determine that $f(r,2)=\frac{1}{2r}$ and determine that $f(3,3)=\frac{2}{27}$, which is already quite intricate. The case $r=3$ is especially interesting in phylogenetics and our result implies that any two (three) binary phylogenetic trees over $n$ taxa have a split at each tree which separates two taxa sets of order at least
	$n/6$ (resp. $2n/27$), and these bounds are asymptotically tight.

\ignore{  The advent of the genomic revolution provides us with means to
  reconstruct the evolutionary history of a set of species from more
  than a single data source. The latter may result in several trees
  over the same taxa set, each representing a different history, and
  the question of how different these trees are from one another is
  natural and imperative, encompassing
  non-trivial combinatorial structures, some of which are open problems for more than
  four decades, such as the classical conjecture of Bandelt and Dress
  concerning the minimum quartet distance.
  
  \emph{Splits} - edges in a tree that induce cuts on the vertex
  set - are a key concept in measuring tree similarity. The popular
  {\em Robinson-Foulds} measure, considers a common split only in the case
  it partitions the taxa set  identically in the two trees.
  To the best of our knowledge, the naturally and intriguing related variant focusing on subsets of the taxa set  has not been handled in this context. Specifically, are there
  (non-identical)  cuts in the trees
  separating (splitting) the taxa set with a {\em constant} fraction of
  shared taxa in each part. The latter question has a classical solution in the case of a single tree. Here we formally define and investigate this question for more than a single tree, a mathematically challenging problem. While the general formulation is in terms of arbitrary trees, the
  restriction to binary phylogenetic trees provides useful asymptotic tight
  bounds on the taxa-set size that any split of two or three binary
  phylogenetic trees share. In particular, any split of two binary phylogenetic trees separates two taxa sets of order at least $n/6$ , and any split of three binary phylogenetic trees separates two taxa sets of order at least $2n/27$, and these bounds are asymptotically tight.}
\vspace{3mm}

\end{abstract}


\section{Introduction}
\ignore{ Phylogenetic trees are used to describe the evolutionary history of a
set of species (also called {\em taxa}) where directed edges signify
parenthood and correspondingly directed  paths represent
ancestry. Under that setting of {\em rooted} trees, the
basic informational unit is a rooted triplet~\cite{ASSU81,SS2000}. When
time notion is problematic, as is usually the case, trees are represented as {\em unrooted} and the basic information is an {\em unrooted quartet}~\cite{Estabrook-systBiol-1985}. Henceforth, we restrict the
discussion only to unrooted trees. 

The flood of genomic data from many sources of information, enables reconstructing the evolutionary history of the same
set of organisms, but from each such information source
separately. Due to key evolutionary processes,
each such source may give rise to a different tree, so we obtain several trees over the
taxa-set under study. The latter calls for the meta-notion of \emph{tree
  dissimilarity} - quantitative parameters over the tree space measuring their
dissimilarity. We mention some of the mostly studied parameters. As a tree is uniquely defined by the set of its
constituting quartets, one such well-studied measure is the quartet
distance \cite{BD-1986}. Another measure is the maximum agreement subtree (MAST) that
represent the largest subset of the taxa set, under which two tree
induced by this subset, are the same~\cite{Kubicka-Kubicki-McMorris-1992,Finden-Gordon-1985-J-Classication}. As removal of every edge (a.k.a.\! split) in a tree induces a partition over the
leaf-set, and the set of these induced partitions uniquely defines the
tree, we may measure how many common (or different) partitions two
trees have. The latter, the Robinson-Foulds symmetric
difference~\cite{RF}, is a popular distance measure in practice but is
very sensitive to tree sizes~\cite{Avni-SystBiol-2015}.
As trees in general, and phylogenetic trees in particular, are
combinatorial objects, the study of these questions raises key
questions regarding the nature of these
measures~\cite{Huber-AML-2005,Grunewald-mathBioSci-2007,Bryant-Steel-AdAppMath-1995}.
Several of these parameters relate to classical fact (see below) that a binary phylogenetic tree has a split which separates the taxa set into relatively balanced disjoint subsets (each subset containing at least $1/3$ of the taxa).
Given several trees on the same taxa set, is it true that we can find splits in each tree which still guarantee a relatively balanced separation of a large taxa set? And if so, what are these guarantees? 
These are the main questions the present paper studies.
We now formalize our question more specifically, for general trees.}

A vertex or edge subset $S$ of a graph is a {\em separator} for disjoint vertex sets $A$ and $B$ if the removal of $S$ from the graph {\em separates} $A$ and $B$; i.e., there is no connected component intersecting both $A$ and $B$ following the removal of $S$.
Already Jordan \cite{jordan-1869} observed that every tree has a vertex when, once removed,
results in a graph with no connected component having more than half of the vertices.
Jordan's observation also yields the well-known result that every tree with maximum degree $r$
has a single edge that separates two vertex sets of order at least $\lceil (n-1)/r \rceil$ each.
Balanced small separators such as this (i.e, separating two vertex sets, each consisting of a constant fraction of the total number of vertices) are a basic building block of divide and conquer graph algorithms (see \cite{golumbic-1980}). The existence of balanced separators has been extended in highly nontrivial ways from trees to bounded tree-width graphs, as first shown independently by Halin \cite{halin-1976} and Robertson and Seymour \cite{RS-1986}, and
to fixed minor-free graphs as first shown for planar graphs by Lipton and Tarjan \cite{LT-1979}.
We note that all of these separator results can be extended to the vertex-weighted setting under appropriate assumptions on the weights.

Motivated by the notion of separators and by the tree-dissimilarity problem in phylogenetics which we mention later, in this paper we consider the case of edge-separating two subsets of vertices simultaneously in {\em several} trees.
Let us now be formal about the problem, and see that it immediately extends the aforementioned
classical result which is the case of a single tree.

Recall that an edge cut $e$ (sometimes called {\em split at $e$}) of a tree is the partition of the tree into the two disjoint trees obtained after removal of the edge $e$. The edge cut $e$ therefore induces
a partition $P_e=\{A,B\}$ of the vertices of $T$ (or any subset of vertices of $T$) into two parts, $A$ and $B$.
For two disjoint subsets of vertices $X,Y$, we say that $X$ and $Y$ are {\em separated} by $P_e$, if $X \subseteq A$ and $Y \subseteq B$, or vice versa.

Consider a family of trees $\T=\{T_1,\ldots,T_k\}$ each containing a common subset of vertices labeled by $[n]$. Notice that this setting already encompasses two important special cases:
(i) $[n]$ is the entire vertex set of each tree;
(ii) $[n]$ is the set of leaves of each tree.
Equivalently, we can view $\T$ as a multigraph whose vertex set contains a subset labeled by $[n]$ and whose edge set is the disjoint union of the edges of $k$ trees.

For $1 \le i \le k$, given an edge $e_i \in E(T_i)$, the cut of $T_i$ at $e_i$ partitions the common subset of vertices $[n]$ of $T_i$ into two parts which we denote by $P_{e_i}$.
We call $(e_1,\ldots,e_k)$ a {\em cut vector}. 
For disjoint subsets $X,Y$ of $[n]$, we say that $X,Y$ are {\em separated by the cut vector}
if they are separated by $P_{e_i}$ for each $1 \le i \le k$.
Notice that in the aforementioned graph-theoretic formulation, this means that any path in the multigraph connecting a vertex of $X$ and a vertex of $Y$ cannot be contained in a single $T_i$ after the cut edges are removed.

For integers $r \ge 2$ and $k \ge 1$, let $f(r,k,n)$ be the largest integer such that given any $k$ trees $\{T_1,\ldots,T_k\}$ each containing a common subset of vertices labeled by $[n]$, and where each tree has maximum degree at most $r$, there
is a cut vector, and there are disjoint subsets $X,Y$ of $[n]$, each of size at least $f(r,k,n)$, such that $X$ and $Y$ are separated by the cut vector; see Figure \ref{f:f32}.

\begin{figure}[!ht]
	\includegraphics[scale=0.7,trim=-50 340 173 110, clip]{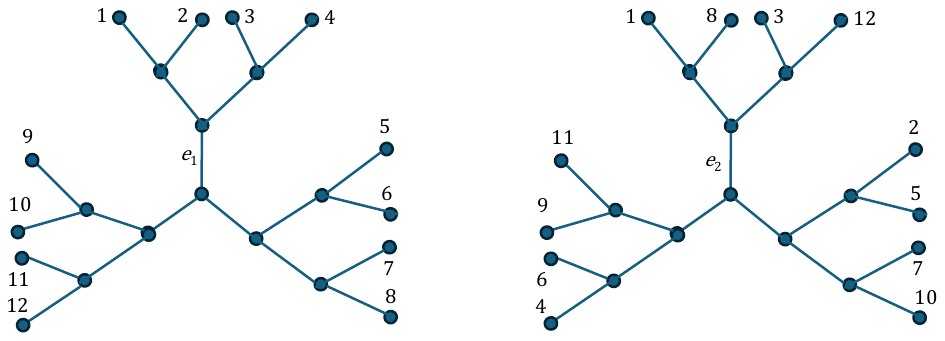}
	\caption{Two trees $T_1$ (left) and $T_2$ (right) with a common subset of vertices labeled by $[12]$ (in this case all vertices are common), both with maximum degree $3$. Any two disjoint subsets of vertices of size $3$ are not separated by a cut vector. This shows that $f(3,2,12) \le 2$. The depicted cut vector $(e_1,e_2)$ separates $X=\{1,3\}$ and $Y=\{5,6,7,9,10,11\}$.
	This shows that $f(\{T_1,T_2\},12) = 2$.}
	\label{f:f32}
\end{figure}  

More generally, for a family of trees $\T$ each containing a common subset of vertices labeled by $[n]$ we may ask for a cut vector of length $|\T|$, such that the smallest size of two disjoint subsets $X,Y$ of $[n]$ separated by the cut vector is maximized,
denoting this size by; again see Figure \ref{f:f32}. So, $f(r,k,n)$ is the minimum of $f(\T,n)$ taken over all such $k$-sized families of trees with bounded degree $r$. Naturally, we are especially interested in the case where $r,k$ are fixed while $n$ grows, so let
$$
f(r,k) = \lim_{n \rightarrow \infty} f(r,k,n)/n \,. ~\footnote{See Proposition \ref{prop:limit} for a proof that the limit exists.}
$$
It is straightforward to see that $f(r,1)=1/r$ (see Lemma \ref{l:simple}) and also quite easy to determine the case $r=2$, i.e., the case of paths,
for which we have $f(2,k)=1/2^k$ (see Proposition \ref{prop:2k}). However, determining $f(r,k)$ in general is considerably more involved. We solve this problem for the case $k=2$ and every degree $r$, and solve the next nontrivial case of three trees.
\begin{theorem}\label{t:main}
	$f(r,2)=\frac{1}{2r}$ and $f(3,3)=\frac{2}{27}$.
\end{theorem}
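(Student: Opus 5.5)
Each of the two equalities needs a lower bound (every family admits a good cut vector) and a matching upper-bound construction. I would treat them separately.

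\emph{Lower bound for $f(r,2)$.} By Lemma \ref{l:simple} (the Jordan-type statement), $T_1$ has an edge $e_1$ splitting $[n]$ into parts $A_1,B_1$ with $|A_1|,|B_1|\ge (n-1)/r$. Let $A_1$ be the smaller part, so $|B_1|\ge n/2$.
\begin{itemize}
\item Give each vertex of $T_2$ weight $1$ if its label lies in $A_1$ and weight $0$ otherwise.
\item The weighted version of Jordan's argument yields an edge $e_2$ of $T_2$ whose two sides each carry weight at least about $|A_1|/r$.
\item So $e_2$ splits $A_1$ into $A_1',A_1''$ with both of size at least $\approx n/r^2$. This is too weak, so a better step is needed.
\end{itemize}
The better step splits $B_1$ and $A_1$ jointly. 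Choose $e_2$ so that one side $S$ of $T_2$ contains at least a $1/r$ fraction of one of the sets $A_1,B_1$ while keeping the other side large.

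The natural target is as follows. Either $|S\cap A_1|\ge n/(2r)$ and $|\bar S\cap B_1|\ge n/(2r)$, or the same holds with $A_1$ and $B_1$ swapped. Then $X=S\cap A_1$ and $Y=\bar S\cap B_1$ are separated by the cut vector.

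To get this, I would walk along $T_2$ from a centroid-like vertex toward the heavy side, tracking the two counts $a(S)=|S\cap A_1|$ and $b(S)=|S\cap B_1|$. I would use a discrete intermediate-value argument: moving across one vertex of degree $\le r$ changes the branch weights by at most a factor related to $r$.

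Flexibility in choosing $e_1$ may also be needed. One option is to optimize $e_1$ jointly, picking among the edges of the path of centroid edges. This balancing step, which converts the single-tree $1/r$ into $1/(2r)$ rather than $1/r^2$, is where I expect the main difficulty for $k=2$.

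\emph{Upper bound for $f(r,2)$.} I would look for a construction. Take $T_1$ to be a spider-like tree with $r$ branches of $n/r$ labels each, so any cut leaves a side with at most roughly $n/r$ labels, or a branch-internal split. Take $T_2$ so that each $T_1$-branch's labels are spread evenly across the $T_2$ structure, for example $T_2$ a spider whose branches interleave labels orthogonally, in the manner of a Latin-square arrangement along paths. Then any $e_2$ catches at most half of a small $T_1$-side against the complement, giving $\min\le n/(2r)+o(n)$. One checks all cut pairs by cases: central cuts versus in-branch cuts.

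\emph{The case $f(3,3)=2/27$.} The same scheme is iterated.
\begin{itemize}
\item Apply the two-tree result to $T_1,T_2$, which yields a structured family of candidate pairs $(X,Y)$ of total size controlled.
\item Then split with $T_3$ using the weighted Jordan argument restricted to $X\cup Y$.
\end{itemize}
Naively this gives only $\tfrac{1}{6}\cdot\tfrac13$, which is below $2/27$. So one must keep several candidate cut vectors for $(T_1,T_2)$ and optimize over them. I expect a case analysis on how the degree-3 centroid of $T_3$ splits the three branches, with proportions $\alpha,\beta,\gamma$. The extremal value $2/27$ suggests optimizing a product such as $\tfrac{2}{3}\cdot\tfrac13\cdot\tfrac13$: at balanced parameters one side gets $1/3$ and the other $2/3$ recursively.

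The matching construction would use three subcubic trees built from ternary nested decompositions: labels indexed by $[3]^3$ blocks, with each tree branching on a different coordinate. A product argument then shows that any cut vector has $\min(|X|,|Y|)\le 2n/27+o(n)$.

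The main obstacle is the lower bound $2/27$, which requires a careful multi-case optimization rather than naive iteration.
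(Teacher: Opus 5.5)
\textbf{Upper bounds.} Your upper-bound constructions fail, and this is an actual error, not an omission.

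For $f(r,2)$, take two $r$-leg spiders with legs $L_1,\dots,L_r$ in $T_1$ and $M_1,\dots,M_r$ in $T_2$. Under \emph{any} labeling, some pair satisfies $|L_i\cap M_j|\le n/r^2$. Cutting both trees at the bases of these legs separates $L_i\setminus M_j$ from $M_j\setminus L_i$, each of size at least $(r-1)n/r^2$. This exceeds $n/(2r)$ for every $r\ge 3$. So your claim that $e_2$ catches at most half of a small $T_1$-side is false: the complement of $M_j$ catches a $(r-1)/r$ fraction of $L_i$.

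Your $[3]^3$ construction for $f(3,3)$ fails the same way, through cuts inside a leg. Read it as spiders whose legs are the classes of one coordinate. Cut $T_1$ inside the leg $\{x_1=v\}$ just before its block $(v,p,q)$ nearest the centre, and cut $T_2,T_3$ at the bases of the legs $\{x_2=p\}$ and $\{x_3=q\}$. Then $X_{1,0}\cap X_{2,1}\cap X_{3,1}$ consists of four blocks and $X_{1,1}\cap X_{2,0}\cap X_{3,0}$ of three, giving $n/9>2n/27$.

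The missing principle is that every cut other than the central ones must be very unbalanced:
\begin{itemize}
\item For $T_1$, the paper takes a balanced path blowup of $K_{1,r}$ with each leaf split into two. Every cut then has small side $n/r$ or at most $n/(2r)$.
\item For $T_2$ when $k=2$, it takes a blowup of the double star on $2(r-1)$ leaves with the middle edge subdivided. Every cut is then exactly $n/2$ or at most $n/(2(r-1))$.
\item Labels are placed independently and uniformly at random. Lemma~\ref{l:random} (via Azuma) makes every intersection $\bigcap_i X_{i,\mathbf{b}(i)}$ equal to $n\prod_i |X_{i,\mathbf{b}(i)}|/n+o(n)$.
\end{itemize}
This reduces the upper bounds to a finite check of products. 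For $f(3,3)$, three independent copies of the split-leaf star leave the pair $\{2/27,4/27\}$ as the best available.

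\textbf{Lower bounds.} For $f(r,2)$ you state the right target but leave the ``balancing step'' open. No walk along $T_2$ and no joint choice of $e_1$ is needed. Take Jordan cuts $e_1,e_2$ independently (Lemma~\ref{l:simple}), so all four sides have at least $(n-1)/r$ labels, and set $S_{ij}=X_{1,i}\cap X_{2,j}$. If both $\min\{|S_{00}|,|S_{11}|\}$ and $\min\{|S_{01}|,|S_{10}|\}$ were below $(n-1)/(2r)$, some row or column of this $2\times 2$ table would have fewer than $(n-1)/r$ labels. That row or column is a side of $e_1$ or $e_2$, a contradiction.

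For $f(3,3)\ge 2/27$ your proposal contains no actual argument. The missing idea is the right intermediate statement, Lemma~\ref{l:lower-2}: for two subcubic trees there is a cut vector separating $X,Y$ with $|X|,|Y|\ge 4n/27-o(n)$ \emph{and} $|X|+|Y|\ge 4n/9-o(n)$. Given this, a single Jordan cut of $T_3$ applied to $X\cup Y$ gives parts $W,Z$ each of size at least $|X\cup Y|/3\ge 4n/27-o(n)$. The same $2\times2$ pigeonhole on $\{X,Y\}$ versus $\{W,Z\}$ then yields $2n/27-o(n)$. No case analysis on the centroid of $T_3$ is involved.

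The sum condition comes cheaply from two Jordan cuts: $ab+(1-a)(1-b)$ and $a+b-2ab$ are both at least $4/9$ for $a,b\in[1/3,2/3]$. However, the guaranteed size $a(1-b)$ of the smaller side can fall below $4/27$, for example at $a=1/3$, $b=2/3$. Repairing this is the technical core of the paper. It switches to the cuts $yu$ or $yv$ at the far endpoint $y$ of $e_1$ and runs a multi-level case analysis with explicit polynomial inequalities in the weights $a,b,c,d$. Your sketch neither identifies this target nor supplies such an argument, so the lower bound for $f(3,3)$ remains unproved.
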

We strongly suspect (see Section \ref{sec:concluding}) that the case of three trees adheres to the following formula:
\begin{conjecture}\label{conj:1}
	$f(r,3)=(r-1)/r^3$ for all $r \ge 2$.
\end{conjecture}

As mentioned earlier, the case $r=3$ is of particular interest in phylogenetics. A short background follows.
Unrooted phylogenetic trees are used to describe the evolutionary relationship of a set of species (also called {\em taxa}), see \cite{SS-2003} for more information on phylogenetic trees. The flood of genomic data from many sources of information, enables reconstructing 
such trees, but each such information source may give rise to a different tree.
This calls for the notion of \emph{tree dissimilarity} - quantitative parameters over the tree space measuring their dissimilarity. We mention some of the mostly studied parameters in phylogenetics. As an unrooted phylogenetic tree is uniquely determined by the set of its constituting quartets (subsets on four taxa; see definition below), one such well-studied measure is the quartet distance \cite{BD-1986}. Another measure is the maximum agreement subtree (MAST) that represents the largest subset of the taxa set, under which two trees
induced by this subset, are the same~\cite{Kubicka-Kubicki-McMorris-1992,Finden-Gordon-1985-J-Classication}. As removal of every edge (a.k.a.\! split) in a tree induces a partition over the
leaf-set, and as the set of these induced partitions uniquely defines the
tree by the classical theorem of Buneman \cite{buneman-1971}, we may measure how many common (or different) partitions two
trees have. The latter, the Robinson-Foulds symmetric
difference~\cite{RF}, is a popular distance measure in practice but is
very sensitive to tree sizes~\cite{Avni-SystBiol-2015}.
Given several trees on the same taxa set, which, as mentioned, is a common theme in phylogenetic reconstruction, is it true that we can find splits in each tree which still guarantee a relatively balanced separation of a large taxa set, and if so, what are these guarantees? 

More formally, an unrooted binary phylogenetic tree \footnote{Hereafter, all unrooted phylogenetic trees are assumed binary.} is a tree whose internal vertices have degree $3$ and whose leaves
(taxa) are labeled with $[n]$; the internal vertices are unlabeled, but note that they determine the topology of the tree. Notice that an unrooted phylogenetic tree has precisely $n-2$ internal vertices and $2n-2$ vertices.  A {\em quartet} in a tree is a pair $X=\{a,b\}, Y=\{c,d\}$ where $a,b,c,d$ are taxa (leaves) such that $X$ and $Y$ are separated by some edge cut $e$ of the tree. We usually denote such a quartet by $ab|cd$. Quartet-based reconstruction is a major topic in phylogenetics, see \cite{ASY-2014} and the references therein. Given several phylogenetic trees on the same taxa set $[n]$ (recall that the leaves of each tree are now labeled with $[n]$ while the internal vertices are unlabeled), the goal is to find
the largest ``pieces'' of non-conflicting (i.e. compatible) information that is consistent with all the given trees. We may measure this by the number of common quartets, or, more generally, by the sizes
of common separated taxa sets. Indeed, as we shall see shortly, our aforementioned problem $f(3,k)$
precisely captures the asymptotic solution of the latter task.

Given a family $\T=\{T_1,\ldots,T_k\}$ of unrooted phylogenetic trees, each on taxa set $[n]$,
we may ask for a cut vector of length $|\T|$, such that the smallest size of two disjoint subsets $X,Y$ of taxa separated by the cut vector is maximized, denoting this size by $g(\T)$.
Let $g(k,n)$ be the minimum of $g(\T)$ ranging over all $k$-sized families of unrooted phylogenetic trees on taxa set $[n]$. We immediately obtain from the definition of $f(3,k,n)$ that $g(k,n) \ge f(3,k,n)$.
Given some adjustments of Theorem \ref{t:main}, it becomes not too difficult to prove that equality asymptotically holds and in fact:
\begin{theorem}\label{t:taxa}
	\begin{align*}
	\frac{n}{6} & \le g(2,n) \le (1+o(1))\frac{n}{6}\,,\\
	\frac{2n}{27} & \le g(3,n) \le (1+o(1))\frac{2n}{27}\,.
	\end{align*}
\end{theorem}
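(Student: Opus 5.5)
The lower bounds come directly from the general setting; the upper bounds need a conversion of the extremal constructions behind Theorem \ref{t:main} into phylogenetic trees.

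\textbf{Lower bounds.} A family of unrooted phylogenetic trees on taxa set $[n]$ is a family of trees of maximum degree at most $3$ containing a common vertex set labeled by $[n]$ (the leaves). Hence $g(k,n)\ge f(3,k,n)$, as already noted. To obtain the exact bounds $n/6$ and $2n/27$, rather than only asymptotic ones, I would check that the lower-bound arguments proving $f(3,2)\ge 1/6$ and $f(3,3)\ge 2/27$ in Theorem \ref{t:main} in fact give $f(3,2,n)\ge n/6$ and $f(3,3,n)\ge 2n/27$ for every $n$. These arguments should be Jordan-type centroid arguments applied successively: cut the first tree to get a balanced split, then cut the second tree relative to each side, and so on. They produce bounds of the form $\lceil\cdot\rceil$ without lower-order losses. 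If a small additive loss does appear there, I would redo the argument in the phylogenetic setting. There every labeled vertex is a leaf, so each edge cut of a binary tree can be chosen with the exact ratio $1/3$ from Jordan's theorem (a centroid edge, with no ``$n-1$'' loss because the centroid is unlabeled). This recovers the clean bounds.

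\textbf{Upper bounds.} Take the extremal families $\T$ from Theorem \ref{t:main} for $r=3$, which show $f(3,2,m)\le(1+o(1))m/6$ and $f(3,3,m)\le(1+o(1))2m/27$. Convert each tree into an unrooted phylogenetic tree on the same labels. For each labeled vertex $v$ that is not a leaf, or not of the right degree, attach a new pendant leaf carrying the label of $v$ and remove the label from $v$. Then suppress degree-$2$ unlabeled vertices, and prune unlabeled leaves repeatedly. The degree of $v$ may rise to $4$. To prevent this, I would start from extremal constructions for degree bound $3$ in which labeled vertices have degree at most $2$. Alternatively, I would replace each degree-$3$ labeled vertex by a path, at the cost of a $(1+o(1))$ factor.

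The key observation is that every edge cut of the new tree induces, on $[n]$, a partition that is also induced by some edge cut of the old tree (or is trivial, with one side of size $1$). A pendant edge separates one label. Internal edges correspond to original edges, since suppression and pruning do not create new partitions. So any cut vector in the new family separating $X,Y$ gives a cut vector in the old family separating the same $X,Y$, and therefore $g(\T')\le f(\T,n)$.

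\textbf{Main obstacle.} The main difficulty is the degree issue: making a labeled internal vertex of degree $3$ into a leaf raises a degree. I would first try to check that the extremal constructions used in Theorem \ref{t:main} can be taken with all labeled vertices being leaves. This is plausible because the construction is likely built from balanced binary trees with labeled leaves. If not, I would take a blow-up of each labeled vertex into a small gadget, a caterpillar carrying several taxa. This changes the counts only by a $(1+o(1))$ factor, which is exactly the slack the statement allows.
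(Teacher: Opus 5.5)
Your upper bound is correct, and it reaches the paper's construction by a slightly different route. The extremal trees of Theorem~\ref{t:upper} are balanced path blowups, so every labeled vertex lies on a blown-up path and has degree at most $2$. The degree obstacle you worry about therefore never arises. Your conversion (attach a pendant leaf at each labeled degree-$2$ vertex, then suppress the unlabeled degree-$2$ backbone vertices) produces exactly the balanced caterpillar blowups the paper uses. The paper then reruns the random-labeling computation of Lemma~\ref{l:random} on the new trees. You instead observe that non-pendant cuts of the new tree induce the same partitions of $[n]$ as old cuts, and that pendant cuts are trivial. This gives $g(\T')\le\max\{1,f(\T,n)\}$ with no recomputation, which is cleaner and more modular.

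The gap is in the exact lower bound $g(3,n)\ge 2n/27$.

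First, your hope that the general lower-bound arguments carry no lower-order loss is wrong. Lemma~\ref{l:simple} only gives $(n-1)/r$, and this is tight: take $K_{1,3}$ with all four vertices labeled. Proposition~\ref{prop:lower} only gives $(n-1)/6$, and Theorem~\ref{t:lower-3} only $2n/27-o(n)$. So the phylogenetic redo is not optional.

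Second, the three-tree bound is not a successive Jordan argument. Proposition~\ref{prop:lower} followed by a Jordan cut of $T_3$ guarantees only $n/18$, even with exact $1/3$ splits. The constant $2/27$ comes from Lemma~\ref{l:lower-2}, and exact Jordan cuts remove only part of the $o(n)$ slack there. Throughout that proof the identity $a+c+d+w(y)=1$ is used as $a+c+d=1$, where $y$ is the endpoint of $e_1$ on the heavier side. You must also show $w(y)=0$. The side of $e_1$ containing $y$ carries at least $n/2$ taxa, so $y$ is not a leaf. It is therefore an unlabeled internal vertex with exactly two further neighbours $u,v$.

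Combine this with exact $1/3$ splits in $T_1$ and $T_2$, and in $T_3$ for the leaf subset $X\cup Y$ (your centroid argument covers this case). Then Lemma~\ref{l:lower-2} holds exactly, with $|X|,|Y|\ge 4n/27$ and $|X|+|Y|\ge 4n/9$. The proof of Theorem~\ref{t:lower-3} then yields exactly $2n/27$. This is precisely the paper's argument. Your proposal asserts ``this recovers the clean bounds'' without identifying the lemma that must be redone or this third error source. The two-tree bound $g(2,n)\ge n/6$ does follow from your fallback as stated, by running Proposition~\ref{prop:lower} with exact $1/3$ splits.
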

Notice that our result implies that for any two such phylogenetic trees, there are disjoint taxa sets $X,Y$ of size at least $n/6$ each, such that for any pair $a,b \in X$ and any pair $c,d \in Y$,
the quartet $ab|cd$ appears in both trees.
Notice that this is much larger than the more restricted requirement in the aforementioned MAST problem where there might only be a $\Theta(\log n)$-size taxa set $Z$ such that all quartets induced by $Z$ are the same in both trees, as proved by Markin \cite{markin-2020}.
 Similarly, for any three such phylogenetic trees, there are disjoint taxa sets $X,Y$ of size at least $2n/27$ each, such that for any pair $a,b \in X$ and any pair $c,d \in Y$, the quartet $ab|cd$ appears in all three trees.

The remainder of this paper consists of four additional sections. In Section 2 we prove our main result, Theorem \ref{t:main}, where we defer our most technical lemma, required for the lower bound of $f(3,3)$,
to Section 3. In Section 4 we consider unrooted phylogenetic trees and see how to adjust Theorem \ref{t:main} so as to obtain Theorem \ref{t:taxa}. Section 5 contains some concluding remarks and open problems.

\section{Proof of Theorem \ref{t:main}}

We begin with upper bound constructions for $f(r,2)$ and $f(r,3)$ which, in particular,
prove the upper bound part of Theorem \ref{t:main}. We first need to introduce some notation that will
be useful throughout the paper.

For a set of trees $T_1,\ldots,T_k$, each containing a subset of common vertices labeled by $[n]$,
consider a cut vector $ \e = (e_1,\ldots,e_k)$ and the corresponding partitions of $[n]$
denoted $P_{e_i}=\{X_{i,0},X_{i,1}\}$ for $1 \le i \le k$.
For a binary vector $\b \in \{0,1\}^k$, let
$X^{\e,\b}_0 = \cap_{i=1}^k X_{i,\b(i)}$ and $X^{\e,\b}_1=\cap_{i=1}^k X_{i,1-\b(i)}$.
For example, if $k=3$ and $\b=(0,1,0)$, then $X^{\e,\b}_0 = X_{1,0} \cap X_{2,1} \cap X_{3,0}$
and $X^{\e,\b}_1= X_{1,1} \cap X_{2,0} \cap X_{3,1}$.
Now, if $A,B$ are disjoint subsets of $[n]$ separated by $\e$,
then, by definition, there must be some vector $\b \in \{0,1\}^k$ such that $A \subseteq X^{\e,\b}_0$ and $B \subseteq X^{\e,\b}_1$. As we are looking at maximizing the sizes of $A$ and $B$, the problem
of finding $f(\{T_1,\ldots,T_k\},n)$ reduces to finding a cut vector $\e$ and a binary vector $\b \in \{0,1\}^k$ such that $\min \{ |X^{\e,\b}_0|\,,\, |X^{\e,\b}_1|\}$ is maximized.

\begin{lemma}\label{l:random}
	Let $k$ be fixed and let $T_1,\ldots,T_k$ be unlabeled trees, each on $\Theta(n)$ vertices, and
	sharing a subset $S$ of $n$ common vertices. Independently for each $1 \le i \le k$, randomly and bijectively assign the labels $[n]$ to $S$ in $T_i$, denoting the now-labeled set by $S_i$. Then whp\footnote{Throughout this paper whp (with high probability) is a shorthand for a sequence of probabilities that goes to $1$ as $n$ goes to infinity.} the following holds. For each cut vector $\e=(e_1,\ldots,e_k)$ and for each binary vector $\b \in \{0,1\}^k$,
	\begin{equation}\label{e:1}
		|X^{\e,\b}_0| = n \prod_{i=1}^k \frac{|X_{i,\b(i)}|}{n} + o(n)\;.
	\end{equation}
\end{lemma}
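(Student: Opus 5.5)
Fix a cut vector $\e=(e_1,\ldots,e_k)$ and a binary vector $\b\in\{0,1\}^k$. The plan is to show that the estimate (\ref{e:1}) fails for this pair only with exponentially small probability, and then take a union bound. The number of cut vectors is at most $\prod_i |E(T_i)| = O(n^k)$, since each tree has $\Theta(n)$ vertices. The number of binary vectors is $2^k$. So it suffices to show, for a fixed pair $(\e,\b)$, that the deviation exceeds $\epsilon n$ with probability at most $2\exp(-c\epsilon^2 n)$. A union bound over $O(2^k n^k)$ events then still tends to $0$. Letting $\epsilon\to 0$ slowly, for instance $\epsilon = n^{-1/3}$, gives the $o(n)$ error whp.

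For fixed $(\e,\b)$, the sets in question are determined as follows. The edge $e_i$ splits $S$ (as an unlabeled set) into two parts of fixed sizes. After the random labeling, $X_{i,\b(i)}$ is a uniformly random subset of $[n]$ of fixed size $m_i=|X_{i,\b(i)}|$. These $k$ subsets are independent, because the labelings are independent. We first reveal the labeling of $T_1$, then $T_2$, and so on. Set $Y_j=\bigcap_{i\le j} X_{i,\b(i)}$ and argue inductively that $|Y_j| = n\prod_{i\le j}(m_i/n) \pm j\epsilon n$. Conditioned on $Y_{j-1}$, the size $|Y_j| = |Y_{j-1}\cap X_{j,\b(j)}|$ is hypergeometric: it counts the elements of a fixed set of size $|Y_{j-1}|$ that land in a uniformly random $m_j$-subset of $[n]$. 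Its mean is $|Y_{j-1}|m_j/n$. By the standard Chernoff/Hoeffding bound for the hypergeometric distribution (Hoeffding showed it is at least as concentrated as the binomial), $\Pr\bigl[\,\bigl||Y_j|-|Y_{j-1}|m_j/n\bigr|>\epsilon n\bigr]\le 2e^{-2\epsilon^2 n}$.

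Since $m_j/n\le 1$, an error of $(j-1)\epsilon n$ in $|Y_{j-1}|$ contributes at most $(j-1)\epsilon n$ to the error in the mean of $|Y_j|$. Adding the fresh deviation $\epsilon n$ gives total error $j\epsilon n$. After $k$ steps, with probability at least $1-2ke^{-2\epsilon^2 n}$, we get $|X^{\e,\b}_0| = n\prod_i (m_i/n) \pm k\epsilon n$. Then the union bound over $O(2^kn^k)$ choices of $(\e,\b)$ with $\epsilon=n^{-1/3}$ gives failure probability $O(n^k e^{-2n^{1/3}})=o(1)$. The same argument covers $X^{\e,\b}_1$ automatically, since it is $X^{\e,\b'}_0$ for the complementary vector $\b'$.

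I expect no real obstacle. The only points needing care are, first, that the union bound is over polynomially many events, which uses the $\Theta(n)$ vertex count, and second, invoking a concentration inequality valid for sampling without replacement. For the latter I would cite Hoeffding's hypergeometric bound, or alternatively McDiarmid's inequality for random permutations, where swapping two labels changes $|X^{\e,\b}_0|$ by at most $2$.
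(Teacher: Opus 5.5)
Your proof is correct. Its outer shell matches the paper's: fix $(\e,\b)$, get failure probability $e^{-\Omega(n^{1/3})}$ for a deviation of $O(n^{2/3})$, and take a union bound over the $O(2^k n^k)$ pairs. The difference lies in the concentration step.

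The paper computes $\mathbb{E}|X^{\e,\b}_0|$ by linearity over the elements $t\in[n]$. It then applies Azuma's inequality to a single Doob martingale that exposes the elements one at a time, together with their positions in all $k$ trees. It asserts in passing that each exposure moves the conditional expectation by at most $1$. Because labels are placed without replacement, each exposure also perturbs the conditional probabilities of all unexposed labels, so that bounded-differences claim really needs a short computation. The increments are bounded by a constant depending on $k$, which is all Azuma requires.

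You instead reveal the $k$ labelings tree by tree. Each conditional law is then exactly hypergeometric, so Hoeffding's without-replacement bound applies directly. The main term $n\prod_i m_i/n$ comes out of the recursion rather than a separate expectation computation. The cost is that the error and the failure probability grow linearly in $k$, which is harmless for fixed $k$.

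So your argument is more modular and rests entirely on a citable inequality. The paper's one-shot martingale argument is shorter to state but leaves the Lipschitz verification implicit.
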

\begin{proof}
	We first note that the right hand side of \eqref{e:1}, without the $o(n)$ term, is the expected size of $X^{\e,\b}_0$ (see below). Thus the lemma states that whp, the actual size is very close to its expected value.
	As there are only $\Theta(n^k)$ possible cut vectors to consider, and $2^k$ choices for $\b$, it suffices to prove that
	for a given cut vector $\e$ and a given $\b$, the claimed equation \eqref{e:1} fails with probability at most
	$(Cn)^{-k}$. In fact, we will show that it fails with probability exponentially small in $n^{1/4}$.
	So fix $\e=(e_1,\ldots,e_k)$ and fix $\b \in \{0,1\}^k$. Notice that this fixes partitions
	of $S_i$ into $\{S_{i,0}, S_{i,1}\}$ for $1 \le i \le k$. After our random labeling of $S_i$,
	$\{S_{i,0}, S_{i,1}\}$ corresponds to a partition $(X_{i,0},X_{i,1})$ of $[n]$ but note that
	$|S_{i,0}|=|X_{i,0}|$ and $|S_{i,1}|=|X_{i,1}|$ are fixed and only depend on $\e$ and $\b$ and $S_i$. 
	So, for $t \in [n]$, we have $\Pr[t \in X_{i,j}] = |X_{i,j}|/n$ for $j=0,1$.
	
	Next, for $t \in [n]$, let $Y_t$ be the indicator random variable for the event $t \in X^{\e,\b}_0$. As the labels of $[n]$ are independently assigned to each $S_i$, we have
	$$
	\mathbb{E}[Y_t] = \Pr[t \in X^{\e,\b}_0] = \prod_{i=1}^k \frac{|X_{i,\b(i)}|}{n}\;.
	$$
	Since $|X^{\e,\b}_0| = \sum_{t \in [n]} Y_t$ we obtain
	$$
	{\mathbb E}[|X^{\e,\b}_0|] = n \prod_{i=1}^k \frac{|X_{i,\b(i)}|}{n}\;.
	$$
	But now consider the martingale corresponding to $|X^{\e,\b}_0|$
	where we expose the elements of $[n]$ (and their assignment to the trees) one by one. In other words, we expose the outcomes $Y_1,Y_2,Y_3,\ldots,Y_n$ in sequence. Before exposure, we have the expectation
	$\mathbb{E}[|X^{\e,\b}_0|] = n \prod_{i=1}^k \frac{|X_{i,\b(i)}|}{n}$ and after each exposure, we have the conditional expectation
	of $|X^{\e,\b}_0|$ given the outcome of the previously exposed $Y_t$'s
	(so the last element of the martingale sequence after all exposures is $|X^{\e,\b}_0|$).
	This martingale has length $n$ and satisfies the  Lipschitz condition
	(a single exposed outcome $Y_t$ cannot change the conditional expectation by more than $1$).
	Hence, by Azuma's inequality,
	$$
	\Pr[ |X^{\e,\b}_0| - {\mathbb E}[|X^{\e,\b}_0|] > n^{2/3}]  < e^{-\frac{n^{4/3}}{2n}} < e^{-n^{1/4}}\;.
	$$
	Consequently, $|X^{\e,\b}_0| = n\prod_{i=1}^k \frac{|X_{i,\b(i)}|}{n}+O(n^{2/3})$ fails with probability exponentially small in $n^{1/4}$.
\end{proof}
Notice that although Lemma \ref{l:random} and \eqref{e:1} is about $|X^{\e,\b}_0|$, it is also about $|X^{\e,\b}_1|$ since $X^{\e,\b}_1=X^{\e,\b^*}_0$ where $\b^*$ is $\b$'s ones' complement.

Suppose that $H$ is a fixed tree on $h$ vertices and $\ell$ leaves in which every vertex has degree at most $r$, and suppose that $n \ge \ell$. Let $\L(H)$ denote the set of leaves of $H$.
Replace each leaf $v$ with a path on $x_v \ge 1$ vertices
such that $\sum_{v \in \L(H)} x_v=n$. 
We call the new tree a {\em path blowup} of $H$. If all $x_v$ are equal (hence equal to $n/\ell$)
we call the new tree a {\em balanced path blowup} of $H$. Notice that a path blowup has $n+h-\ell$ vertices
and maximum degree at most $r$. We call the $h-\ell$ internal vertices of $H$ the {\em backbone} of the path blowup, as they remain intact following the blowup.

Having established the aforementioned notation and lemma, we are now ready to prove our upper bounds.
\begin{theorem}\label{t:upper}
	$f(r,2) \le \frac{1}{2r}$ and $f(r,3) \le \frac{(r-1)}{r^3}$.
\end{theorem}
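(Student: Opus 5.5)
The plan is to take each tree to be a random labeling of a balanced path blowup of a fixed tree $H$, and to apply Lemma \ref{l:random}. Whp every cut vector $\e$ and every $\b$ then satisfy $|X^{\e,\b}_0| = n\prod_i \alpha_i + o(n)$ and $|X^{\e,\b}_1| = n\prod_i(1-\alpha_i)+o(n)$, where $\alpha_i = |X_{i,\b(i)}|/n$. Hence, up to $o(n)$, $f(\T,n)/n$ is at most the maximum of $\min\{\prod_i \alpha_i,\prod_i(1-\alpha_i)\}$ over the achievable split fractions $\alpha_i$. So the whole task reduces to choosing $H$ so that the set of achievable fractions is poor, and then solving a small optimization problem.

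For $H$ I will take the star $K_{1,r}$, whose path blowup is a spider with $r$ legs of length $n/r$ each and maximum degree $r$. Cutting an edge inside a leg, or the edge joining a leg to the center, separates some $\alpha\in[0,1/r]$ of the labels from the rest. The achievable split fractions are therefore exactly the pairs $\{\alpha,1-\alpha\}$ with $\alpha\le 1/r$, up to $O(1/n)$.

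For $k=2$ there are two cases, by symmetry. If both small sides lie on the same side, the bound is $\min\{\alpha_1\alpha_2,(1-\alpha_1)(1-\alpha_2)\}\le \alpha_1\alpha_2\le 1/r^2\le 1/(2r)$. If the small sides lie on opposite sides, the bound is $\min\{\alpha_1(1-\alpha_2),(1-\alpha_1)\alpha_2\}$. The first term is at most $\alpha_1$, and the minimum of the two is at most their geometric mean $\sqrt{\alpha_1\alpha_2(1-\alpha_1)(1-\alpha_2)}$. I will maximize under $\alpha_i\le 1/r$: each factor $\alpha(1-\alpha)$ is increasing on $[0,1/2]$, so the maximum is at $\alpha_1=\alpha_2=1/r$, giving $(r-1)/r^2$. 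That is too large (for $r=3$ it is $2/9$), which means the spider is the wrong $H$ for $k=2$.

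So I have to recheck what $H$ should give $1/(2r)$. The extremal pair of cuts is each $1/r$ cut taken on opposite sides, so a better $H$ should also rule out fractions near $1/2$ without helping. I will try a small $H$ instead, for example a backbone that is a path with pendant vertices, such as a caterpillar. Its path blowup allows fractions $\alpha$ that are multiples of some unit, and I will compute $\max\min$ over those. The first guess is: $T_1$ a blowup whose achievable fractions are $\{j/r\}$, and $T_2$ blown up over a different $H$ with only fraction $1/2$ available. Then opposite sides give $\min\{\tfrac1r\cdot\tfrac12,\,(1-\tfrac1r)\tfrac12\}=1/(2r)$. For $k=3$ I would similarly pair spiders with a tree whose only balanced option gives the factor $(r-1)/r$, aiming at $\tfrac1r\cdot\tfrac1r\cdot\tfrac{r-1}{r}$.

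The \textbf{main obstacle} is exactly this choice of the trees $H_i$, which may differ between the $T_i$ since Lemma \ref{l:random} allows different trees on a common labeled set. The requirement is that the resulting continuous optimization has value exactly $1/(2r)$, respectively $(r-1)/r^3$. That optimization must then be checked over all sign patterns $\b$ and all cut positions, including cuts inside the backbone and in the interior of legs where the fractions vary continuously. Once the trees are fixed, that verification is routine and monotonicity arguments should suffice.
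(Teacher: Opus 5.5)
Your framework is the same as the paper's: random labelings of path blowups, Lemma~\ref{l:random}, and the reduction to maximizing $\min\{\prod_i\alpha_i,\prod_i(1-\alpha_i)\}$ over the achievable split fractions. Your diagnosis that the spider fails for $k=2$ is correct, and your target values $\frac1r\cdot\frac12$ and $\frac1r\cdot\frac1r\cdot\frac{r-1}{r}$ are the right ones. But the proposal stops where the proof begins: you never fix the trees, and these constructions are the whole content of the theorem. The missing principle is that every tree needs a \emph{gap} in its set of small-side fractions. It should have one balanced backbone value, and every other cut must lie below a precise threshold. Leg cuts always contribute a full interval $[0,\text{leg mass}]$, so a blowup with ``only fraction $1/2$'' does not exist; what must be controlled is exactly the leg masses. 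Your tentative choices violate this. Allowing $2/r$ in $T_1$ already breaks $k=2$ for $r\ge 4$, since against $1/2$ it gives $\min\{\frac2r\cdot\frac12,\frac{r-2}{r}\cdot\frac12\}=\frac1r$. For $k=3$ a spider is fatal. Suppose the family contains a spider and two further trees with a cut of ratio $1/r$ (another spider, or your tree with factor $\frac{r-1}{r}$). Cut those two at ratio $1/r$ and a leg of the spider at ratio $\alpha$. The separated sets then have sizes $\frac{1-\alpha}{r^2}n$ and $\frac{(r-1)^2\alpha}{r^2}n$ up to $o(n)$. Choosing $\alpha=1/((r-1)^2+1)\le 1/r$ makes both equal to $\frac{(r-1)^2}{r^2((r-1)^2+1)}n$, which exceeds $\frac{r-1}{r^3}n$ for every $r\ge 3$ (e.g.\ $4/45>2/27$).

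For $k=2$ the paper uses two trees. $T_1$ is the balanced path blowup of $K_{1,r}$ with each leaf split into two leaves, so its small-side fractions are $\{\frac1r\}\cup[0,\frac1{2r}]$. $T_2$ is the blowup of the double star with $r-1$ leaves on each side, with fractions $\{\frac12\}\cup[0,\frac1{2(r-1)}]$. These leg masses are exactly what the mixed cases require (e.g.\ $\frac{r-1}{r}\cdot\frac1{2(r-1)}=\frac1{2r}$), and the four-case check is then immediate. For $k=3$ the paper takes three copies of this $T_1$. Be aware that this works for $r=3$ but not for $r\ge 4$. In its case (v), cut two copies at ratio $\frac1r$ and the third inside a leg at $\alpha=1/((r-1)^2+1)\le\frac1{2r}$. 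This gives the same value $\frac{(r-1)^2}{r^2((r-1)^2+1)}>\frac{r-1}{r^3}$ (e.g.\ $9/160>3/64$ for $r=4$). The gap actually needed is $(\frac1{r(r-1)},\frac1r)$. You get it by splitting each leaf of $K_{1,r}$ into $r-1$ leaves, which is the same tree when $r=3$. Take $p_i\in\{\frac1r\}\cup[0,\frac1{r(r-1)}]$ in all three copies. The pattern with all small sides together gives at most $\frac1{r^3}$. For the other pattern, $\min\{p_1p_2(1-p_3),(1-p_1)(1-p_2)p_3\}$ is bounded as follows:
\begin{itemize}
\item it is at most $\frac1{r^2}\cdot\frac{r-1}{r}$ if $p_3=\frac1r$;
\item it is at most $\frac{(r-1)^2}{r^2}\cdot\frac1{r(r-1)}=\frac{r-1}{r^3}$ if $p_1=p_2=\frac1r$ and $p_3$ is a leg fraction;
\item it is at most $\frac1{r^2(r-1)}\le\frac{r-1}{r^3}$ otherwise.
\end{itemize}
This uses $r\ge 3$; the case $r=2$ is Proposition~\ref{prop:2k}.
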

\begin{proof}
	We begin with $f(r,2)$. Consider the following two constructions.
	Let $H_1$ be obtained from the star $K_{1,r}$ by ``splitting'' each leaf into two leaves;
	note that $H_1$ has $2r$ leaves and maximum degree $r$.
	Suppose that $2r$ divides $n$.
	Let $T_1$ be a balanced path blowup of $H_1$, so $T_1$ has $n+r+1$ vertices of which $r+1$ are backbone vertices. Recall that the balanced double star on $2(r-1)$ leaves is obtained by taking two vertex-disjoint stars $K_{1,r-1}$ and connecting their centers with an edge.
	Let $H_2$ be obtained by the balanced double star on $2(r-1)$ leaves by subdividing the edge connecting the two centers $r-1$ times; note that $H_2$ has $2(r-1)$ leaves and maximum degree $r$.
	Suppose that $2(r-1)$ divides $n$ (so, overall, $r(r-1)$ divides $n$).
	Let $T_2$ be the balanced path blowup of $H_2$, so $T_2$ has $n+r+1$ vertices of which $r+1$ are backbone vertices.
	
	Randomly and bijectively assign the labels $[n]$ to the non-backbone vertices of $T_1$
	and similarly (and independently) do that for the non-backbone vertices of $T_2$.
	So, the conditions of Lemma \ref{l:random} are met (each of the trees $T_1,T_2$ has $n+r+1=\Theta(n)$ vertices, of which $n$ are randomly assigned the label set $[n]$).
	
	We claim that whp $f(\{T_1,T_2\},n) \le n/2r +o(n)$.
	As each of $T_1, T_2$ has bounded degree $r$, showing this immediately yields $f(r,2) \le 1/2r$, as required.
	To see that whp $f(\{T_1,T_2\},n) \le n/2r +o(n)$ we shall use Lemma \ref{l:random},
	namely we shall assume that \eqref{e:1} holds.
	Consider any cut vector $\e=(e_1,e_2)$ and
	any binary vector $\b \in \{0,1\}^2$.
	Let us consider the various possible sizes of $X_{1,0},X_{1,1},X_{2,0},X_{2,1}$ and deduce 
	an upper bound for $\min \{ |X^{\e,\b}_0|\,,\, |X^{\e,\b}_1|\}$, whereby an upper bound for
	$f(\{T_1,T_2\},n)$.
	
	By our construction of $T_1$ we have that $|X_{1,0}| \in \{\frac{n}{r}, \frac{n(r-1)}{r},\alpha n\}$
	where $\alpha \le 1/2r$ or $\alpha \ge 1-1/2r$.
	By our construction of $T_2$ we have that $|X_{2,0}| \in \{\frac{n}{2}, \beta n\}$ where
	$\beta \le 1/2(r-1)$ or $\beta \ge 1-1/2(r-1)$. Recall also that $|X_{i,1}|=n-|X_{i,0}|$ for $i=1,2$.
	
	Let us now consider all possible combinations of the pair of numbers
	$|X_{1,\b(1)}| \cdot |X_{2,\b(2)}|$ and $|X_{1,1-\b(1)}|\cdot |X_{2,1-\b(2)}|$.
	By renaming and symmetry we have that the possible pairs of numbers, divided by $n^2$, are
	\begin{align*}
		(i) & ~ \{\frac{1}{r} \cdot \frac12\,,\, \frac{r-1}{r} \cdot \frac12\}\,, \\
		(ii) & ~ \{\frac{1}{r} \cdot \beta\,,\, \frac{r-1}{r} \cdot (1-\beta)\}\,, \\
		(iii) & ~ \{\alpha \cdot \frac12 \,,\, (1-\alpha) \cdot \frac12\}\,, \\
		(iv) & ~ \{\alpha \cdot \beta \,,\, (1-\alpha) \cdot (1-\beta)\}\,.
	\end{align*}
	Let $c$ denote the minimum of the two numbers in each of (i), (ii), (ii), (iv), respectively.
	We have for (i) that $c \le 1/2r$.
	For (ii) we have that if $\beta \le 1/2(r-1)$, then $c \le 1/2r(r-1)$ and if
	$\beta \ge 1-1/2(r-1)$, then $c \le ((r-1)/r)\cdot (1/2(r-1))=1/2r$. In any case,
	$c \le 1/2r$.
	For (iii) and (iv) we have (recalling that $\alpha \le 1/2r$ or $\alpha \ge 1-1/2r$) that
	$c \le \min\{\alpha,1-\alpha\} \le 1/2r$.
	Hence, in all cases (i) (ii), (iii), (iv), the minimum of the two numbers does not exceed $1/2r$.
	We therefore obtain that the product $\prod_{i=1}^2 \frac{|X_{i,\b(i)}|}{n}$ in \eqref{e:1} does not exceed $1/2r$,
	so by \eqref{e:1}, $|X^{\e,\b}_0| \le n/2r +o(n)$ (and analogously $|X^{\e,\b}_1| \le n/2r +o(n)$), as required.
	
	We proceed with the construction yielding the upper bound for $f(r,3)$.
	Let $T_1$ be the same tree constructed in the aforegiven proof of the upper bound for $f(r,2)$ and let $T_2$ and $T_3$ be additional copies of $T_1$.
	Randomly and bijectively assign the labels $[n]$ to the non-backbone vertices of $T_i$,
	independently for $i=1,2,3$, so, the conditions of Lemma \ref{l:random} are met.
	
	We claim that whp $f(\{T_1,T_2,T_3\},n) \le n(r-1)/r^3 +o(n)$.
	As each of $T_1,T_2,T_3$ has bounded degree $r$, showing this immediately yields $f(r,3) \le (r-1)/r^3$.
	To see that whp $f(\{T_1,T_2,T_3\},n) \le n(r-1)/r^3 +o(n)$ we shall use Lemma \ref{l:random},
	namely we shall assume that \eqref{e:1} holds.
	Consider any cut vector $\e=(e_1,e_2,e_3)$ and any binary vector $\b \in \{0,1\}^3$.
	Let us consider the various possible sizes of $X_{1,0},X_{1,1},X_{2,0},X_{2,1},X_{3,0},X_{3,1}$ and deduce an upper bound for $\min \{ |X^{\e,\b}_0|\,,\, |X^{\e,\b}_1|\}$, whereby an upper bound for
	$f(\{T_1,T_2, T_3\},n)$.
	
	By our construction of $T_i$ we have for each $i=1,2,3$ that $|X_{i,0}| \in \{\frac{n}{r}, \frac{n(r-1)}{r}, \alpha n\}$ where $\alpha \le 1/2r$ or $\alpha \ge 1-1/2r$.
	Recall also that $|X_{i,1}|=n-|X_{i,0}|$ for $i=1,2,3$.
	
	Let us now consider all possible combinations of the pair of numbers
	$|X_{1,\b(1)}|\cdot |X_{2,\b(2)}| \cdot  |X_{3,\b(3)}|$ and $|X_{1,1-\b(1)}|\cdot |X_{2,1-\b(2)}|\cdot |X_{3,1-\b(3)}|$.
	By renaming and symmetry we have that the possible pairs of numbers, divided by $n^3$ are of the form
	\begin{align*}
		(i) & ~ \{\frac{1}{r^3}\,,\, \frac{(r-1)^3}{r^3}\}\,, \\
		(ii) & ~ \{\frac{r-1}{r^3},\, \frac{(r-1)^2}{r^3}\}\,, \\
		(iii) & ~ \{\frac{\alpha}{r^2} \,,\, \frac{(1-\alpha)(r-1)^2}{r^2}\}\,, \\
		(iv) & ~ \{\frac{\alpha(r-1)}{r^2} \,,\, \frac{(1-\alpha)(r-1)}{r^2}\}\,, \\
		(v) & ~ \{\frac{\alpha(r-1)^2}{r^2} \,,\, \frac{1-\alpha}{r^2}\}\,, \\
		(vi) & ~ \{\frac{\alpha\beta}{r} \,,\, \frac{(1-\alpha)(1-\beta)(r-1)}{r}\}\,, \\
		(vii) & ~ \{\alpha\beta\gamma \,,\, (1-\alpha)(1-\beta)(1-\gamma)\}
	\end{align*}
	where $\alpha,\beta,\gamma$ are in $[0,1/2r] \cup [1-1/2r,1]$.
	It is easy to verify that the minimum in each of these possible pairs of numbers does not exceed
	$(r-1)/r^3$. We therefore obtain that the product $\prod_{i=1}^3 |X_{i,\b(i)}|/n$ in \eqref{e:1} does not exceed $(r-1)/r^3$,
	so by \eqref{e:1}, $|X^{\e,\b}_0| \le n(r-1)/r^3 +o(n)$ (and analogously $|X^{\e,\b}_1| \le n(r-1)/r^3 +o(n)$), as required.
\end{proof}

We now proceed with our lower bounds for $f(r,2)$ and $f(3,3)$. To this end, we first need to recall
the classical observation about trees with bounded degree $r$ whose proof is left as an exercise. 
\begin{lemma}\label{l:simple}
	Let $T$ be a tree with bounded degree $r$ and with a subset of vertices labeled by $[n]$. Then there is always a single edge cut $e$ such that the size of each part of the corresponding partition of $[n]$ is at least $(n-1)/r$. \qed
\end{lemma}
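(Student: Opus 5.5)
The plan is the standard centroid argument. Root $T$ at any vertex, and call a vertex of $[n]$ a labeled vertex. For every edge $e$ of $T$, let $w(e)$ be the number of labeled vertices in the component of $T-e$ that does not contain the root (the ``lower side'' of $e$).

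Call an edge \emph{heavy} if $w(e) \ge (n-1)/r$. Two cases are immediate:
\begin{itemize}
\item If some heavy edge also satisfies $n - w(e) \ge (n-1)/r$, we are done.
\item If no edge is heavy, look at the root $v$. Each of its at most $r$ children subtrees contains fewer than $(n-1)/r$ labeled vertices, so together with $v$ itself there are fewer than $1 + r\cdot(n-1)/r = n$ labeled vertices, a contradiction (for $n \ge 2$; the case $n \le 1$ is trivial).
\end{itemize}

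Otherwise, descend. Pick a heavy edge $e=uv$, with $v$ the lower endpoint, that is deepest, meaning no edge below $v$ is heavy. The child edges of $v$ number at most $r-1$, because $v$ has degree at most $r$ and one of its edges is $e$. Each child edge has weight less than $(n-1)/r$, so
\[
w(e) < 1 + (r-1)\frac{n-1}{r}.
\]
Hence
\[
n - w(e) > (n-1) - (r-1)\frac{n-1}{r} = \frac{n-1}{r}.
\]
So $e$ itself works: one side has $w(e) \ge (n-1)/r$ labeled vertices by heaviness, and the other has $n-w(e) > (n-1)/r$.

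This also covers the first case automatically, so the argument is simply to take a deepest heavy edge, which exists whenever the root is not already a counterexample. There is no real obstacle. The only care needed is the counting: the lower endpoint may itself be labeled (the $+1$), and it has only $r-1$ children. That is exactly why the bound is $(n-1)/r$ rather than $n/r$.
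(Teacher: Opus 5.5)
Your proof is correct and is essentially the standard centroid argument the paper has in mind. The paper leaves this lemma as an exercise and attributes it to Jordan's centroid observation: pick a vertex $c$ all of whose branches carry at most $n/2$ labels, then cut the edge from $c$ to its heaviest branch. You instead locate the edge directly by taking a deepest heavy edge and bounding its lower side by $1+(r-1)\frac{n-1}{r}$; this difference is cosmetic.
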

Note that the lemma implies that $f(r,1,n) \ge (n-1)/r$ and clearly $f(r,1,n) \le \lceil (n-1)/r \rceil$ by taking, say, a path blowup of $K_{1,r}$. Hence, we have $f(r,1)=1/r$, as mentioned in the introduction. We next prove a lower bound for $f(r,2,n)$.
\begin{proposition}\label{prop:lower}
	Suppose that $T_1$ and $T_2$ are two trees of bounded degree $r$, each having a common subset of vertices labeled by $[n]$.
	Then there is a cut vector $(e_1,e_2)$ and there are disjoint subsets $X,Y$ of $[n]$, each of size at least $(n-1)/2r$, such that $X$ and $Y$ are separated by the cut vector.
\end{proposition}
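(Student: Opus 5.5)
The plan is to combine Lemma \ref{l:simple} in one tree with a sink (centroid-type) argument in the other, reducing everything to a counting statement about the table of intersections $|K_i\cap L_j|$. Throughout, $t=(n-1)/2r$ is the target size.

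\textbf{Step 1: a balanced cut in $T_2$.} Apply Lemma \ref{l:simple} to $T_2$ to get an edge $e_2$ whose partition $\{C,D\}$ of $[n]$ has $|C|,|D|\ge (n-1)/r = 2t$. Name the parts so that $|C|\le |D|$, hence $|D|\ge n/2$.

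\textbf{Step 2: a sink vertex in $T_1$.} Orient each edge $e$ of $T_1$ toward its side $S$ when the other side $S'$ satisfies $|D\cap S'|<t$. Since $2t\le |D|$, no edge is oriented both ways. If some edge is unoriented, both of its sides contain at least $t$ elements of $D$. Otherwise the oriented tree has a sink $v$; every component $K_1,\dots,K_d$ of $T_1-v$ ($d\le r$) then has $|D\cap K_i|<t$, so $|D\setminus K_i|\ge |D|-t\ge t$. In the sink case, cut $T_1$ at the edge from $v$ to the component $K_i$ maximizing $|C\cap K_i|$, and take $X=C\cap K_i$, $Y=D\setminus K_i$. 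This is separated by $(e_1,e_2)$ with $\b=(0,0)$.

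\textbf{Step 3: the obstacle.} Pigeonhole only gives $|C\cap K_i|\ge(|C|-1)/r$, which is about $n/r^2$ when $|C|\approx n/r$. That is too weak. So the step I expect to be the main obstacle is that $e_2$ must not be fixed in advance: both cuts have to be chosen jointly.

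\textbf{Step 4: the joint argument I would try first.} Take centroid vertices in both trees: $v\in T_1$ with components $K_1,\dots,K_d$ and $w\in T_2$ with components $L_1,\dots,L_m$. Each component has at most $n/2$ labels, $d,m\le r$, and at most two labels are lost at $v$ and $w$. Cutting the edge into $K_i$ and the edge into $L_j$ offers the two candidate pairs
\[
(K_i\cap L_j^c,\;K_i^c\cap L_j)\quad\text{and}\quad(K_i\cap L_j,\;K_i^c\cap L_j^c).
\]
Let $a_{ij}=|K_i\cap L_j|$. The key claim is that some $(i,j)$ and some choice of pair gives a minimum of at least $t$.

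I would argue this by contradiction, with two sub-cases.
\begin{itemize}
\item If some component, say $K_i$, is large, meaning close to $n/2$, then split it using the $L_j$'s and argue as in Step 2 with the roles of the trees exchanged. The bound $|K_i^c|\ge n/2$ supplies the large opposite side.
\item If all components are small, then $\sum_{i,j}a_{ij}\ge n-2$ while every row and column sum is at most $n/2$. Summing the failed inequalities $\min(a_{ij},\,n-|K_i|-|L_j|+a_{ij})<t$ and $\min(|K_i|-a_{ij},\,|L_j|-a_{ij})<t$ over all pairs should then contradict $\sum_{i,j}a_{ij}\ge n-2$ whenever $d,m\le r$.
\end{itemize}
The crux is making this averaging tight enough to give exactly $(n-1)/2r$. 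If the uniform averaging fails, I would refine by walking $v$ along the $D$-orientation of Step 2 until the $C$-mass in some component first reaches $t$, and then control what is lost at the last step.
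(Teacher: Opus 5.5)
\textbf{There is a genuine gap: the central claim is never proved.} In Step 4 you assert that summing the failed inequalities over all pairs $(i,j)$ ``should'' contradict $\sum_{i,j}a_{ij}\ge n-2$. You yourself call making this tight enough to reach $(n-1)/2r$ the crux, and your fallback (walking $v$ along the $D$-orientation) is not specified. The first sub-case (``argue as in Step 2 with the roles exchanged'') reuses the Step 2 argument, which by your own Step 3 only gives about $n/r^2$. As written, nothing in the proposal produces two separated sets of size $(n-1)/2r$.

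\textbf{The diagnosis in Step 3 is also mistaken.} The two cuts do not need to be chosen jointly. Step 2 fails because you picked $e_1$ by a sink rule that forces $|D\cap K_i|<t$. That rule kills the second diagonal $(C\cap K_i^c,\,D\cap K_i)$, and you also did not take $e_1$ to be balanced.

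\textbf{The missing idea is a $2\times 2$ pigeonhole, which is the paper's entire proof.}
\begin{itemize}
\item Apply Lemma \ref{l:simple} independently to $T_1$ and $T_2$. Then the four parts $X_{1,0},X_{1,1},X_{2,0},X_{2,1}$ each have size at least $2t=(n-1)/r$.
\item Set $S_{ij}=X_{1,i}\cap X_{2,j}$. Both pairs $(S_{00},S_{11})$ and $(S_{01},S_{10})$ are separated by $(e_1,e_2)$.
\item Suppose each pair contained a set of size $<t$. Every cell on one diagonal shares a row or column with every cell on the other diagonal. So the two small sets lie in a common row or column, and one of the four parts has size $<2t$, a contradiction.
\end{itemize}

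In your Step 4 notation this finishes immediately, with no averaging over all pairs $(i,j)$. Take $K_i$ and $L_j$ to be the largest components at the two centroids. The at least $n-1$ labels off $v$ are spread over at most $r$ components, so $|K_i|\ge (n-1)/r$, and likewise $|L_j|\ge (n-1)/r$. Since every component has at most $n/2$ labels, $|K_i^c|,|L_j^c|\ge n/2\ge (n-1)/r$. The pigeonhole above then applies to the cuts at the edges into $K_i$ and $L_j$.
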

\begin{proof}
	For $i \in \{1,2\}$, consider some split $e_i$ of $T_i$ such that the
	corresponding partitions $P_{e_i}=\{X_{i,0}, X_{i,1}\}$ of $[n]$ satisfy Lemma \ref{l:simple},
	so that $|X_{i,j}| \ge (n-1)/r$.
	
	We say that the {\em type} of a vertex $t \in [n]$ is $(i,j)$ if $t \in X_{1,i}$ and $t \in X_{2,j}$.
	So there are four possible types: $(0,0), (0,1), (1,0), (1,1)$. Let $S_{i,j}$ denote the set of vertices
	of type $(i,j)$. Then we have $X_{1,0}=S_{0,0} \cup S_{0,1}$, $X_{1,1}=S_{1,0} \cup S_{1,1}$
	$X_{2,0} = S_{0,0} \cup S_{1,0}$ and $X_{2,1} = S_{0,1} \cup S_{1,1}$.
	We have that $S_{0,0}$ and $S_{1,1}$ are separated by $(e_1,e_2)$ and that $S_{0,1}$ and $S_{1,0}$
	are separated by $(e_1,e_2)$. It remains to prove that
	$$
	\max \{ \min \{|S_{0,0}|,|S_{1,1}|\} \, , \min \{|S_{0,1}|,|S_{1,0}|\} \} \ge \frac{n-1}{2r}\;.
	$$
	Indeed, suppose the left hand side is smaller than $(n-1)/2r$, then one of the following holds:
	$|S_{0,0}|+|S_{0,1}|$ or $|S_{0,0}|+|S_{1,0}|$ or $|S_{1,1}|+|S_{0,1}|$ or $|S_{1,1}|+|S_{1,0}|$
	is smaller than $(n-1)/r$. But these are exactly the sizes of the sets $X_{1,0}$, $X_{2,0}$, $X_{2,1}$, $X_{1,1}$ which, recall, are at least $(n-1)/r$ each, a contradiction.
\end{proof}

Notice that Proposition \ref{prop:lower} gives $f(r,2) \ge 1/2r$, so together with Theorem \ref{t:upper} we have that $f(r,2)=1/2r$ as required for Theorem \ref{t:main}.
As we have already proved $f(3,3) \le 2/27$ as a special case of Theorem \ref{t:upper},
it only remains to prove that $f(3,3) \ge 2/27$ to obtain Theorem \ref{t:main} in full.

\begin{theorem}\label{t:lower-3}
	Suppose that $T_1,T_2,T_3$ are trees of bounded degree $3$ each having a common subset of vertices labeled by $[n]$.
	Then there is a cut vector $(e_1,e_2,e_3)$ and disjoint subsets $X,Y$ of $[n]$, each of size at least $2n/27 - o(n)$, such that $X$ and $Y$ are separated by the cut vector.
	In particular, $f(3,3) \ge 2/27$.
\end{theorem}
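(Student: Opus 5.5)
We need to find, in three subcubic trees, edges $e_1,e_2,e_3$ and a sign vector $\b\in\{0,1\}^3$ with $\min\{|X^{\e,\b}_0|,|X^{\e,\b}_1|\}\ge 2n/27-o(n)$. The plan is a two-stage argument. First fix a good cut in $T_1$ and $T_2$ via Proposition~\ref{prop:lower}-type reasoning. Then exploit the flexibility of choosing the third cut among the many edges of $T_3$. The paper says the key technical lemma is deferred to Section~3, so I expect the heart to be a lemma about a single subcubic tree with vertices carrying \emph{weights of several colours}.

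\textbf{Stage 1: reduce to a two-colour problem.} Choose $e_1$ in $T_1$ by Lemma~\ref{l:simple}, with parts of size at least $(n-1)/3$. In $T_2$ we should not insist on one cut. For each edge $e_2$, the four sets $S_{i,j}=X_{1,i}\cap X_{2,j}$ arise, and we pair $S_{0,0}$ with $S_{1,1}$, or $S_{0,1}$ with $S_{1,0}$. Fix a pairing, say $A=S_{0,0}$ and $B=S_{1,1}$. The task in $T_3$ is then: given disjoint sets $A,B$ of vertices of a subcubic tree, find an edge cut $e_3$ and a side assignment maximizing $\min\{|A\cap P|,|B\cap Q|\}$, where $P,Q$ are the two sides, possibly swapped. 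Equivalently, with $A=A_0\cup A_1$ and $B=B_0\cup B_1$ split by $e_3$, we maximize $\max\{\min(|A_0|,|B_1|),\min(|A_1|,|B_0|)\}$.

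\textbf{Stage 2 (key lemma): a two-weight separator in a subcubic tree.} I would prove the following. In a tree of maximum degree $3$ with disjoint vertex sets $A,B$ of sizes $a\le b$, there is an edge with $\max\{\min(|A_0|,|B_1|),\min(|A_1|,|B_0|)\}\ge g(a,b)-O(1)$ for an explicit function $g$. The extremal example suggests the shape of $g$: a balanced path blowup of $H_1$, with six legs, each leg receiving fractions of $A$ and $B$. The proof would walk along edges in the direction of the heavier $A$-side, as in Jordan's centroid argument, and stop at a vertex $v$ of degree $3$ where all three branches have small $A$-mass. Then it would case-analyse how the $B$-mass distributes among the three branches, using that the two best branches at $v$ give cuts combining a branch against the other two. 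The natural guess is $g(a,b)\approx \min\{a/3,\ \dots\}$, mixing $a$ and $b$ linearly in different regimes. The regime boundaries must be computed by optimising over distributions of $(A,B)$ over three branches. I expect this lemma, with its case analysis, to be the main obstacle.

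\textbf{Stage 3: optimise over the first two trees.} With $g$ in hand, we need a choice of $e_1,e_2$ and a pairing such that $g(|S_{0,0}|,|S_{1,1}|)\ge 2n/27$ or $g(|S_{0,1}|,|S_{1,0}|)\ge 2n/27$. Here the freedom in $T_2$ matters as well. Rather than a fixed centroid cut, we may slide $e_2$ along the path toward the centroid, keeping the $S_{i,j}$ sizes continuous up to $O(1)$ jumps per step. We can also use that $T_1$ itself offers at least the three branch cuts at its centroid. The target $2/27=(1/3)(2/3)(1/3)$ matches the product structure of Lemma~\ref{l:random}. So I would check that the worst case is $|X_{i,j}|\in\{n/3,2n/3\}$ with independent-looking intersections, giving pairs like $(2n/9,\,2n/9)$ or $(n/9,\,4n/9)$, for which $g\ge 2n/27$. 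The remaining cases would be handled by showing that lopsided intersections make another pairing, or another branch of $T_1$ or $T_2$, strictly better. The $o(n)$ term absorbs the $O(1)$ losses from discreteness.
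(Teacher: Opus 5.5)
\textbf{There is a genuine gap.} Your proposal leaves open exactly the two steps that carry all the difficulty. Stage~2 never specifies $g$. Stage~3 rests on the unproved heuristic that the worst case for the first two trees has ``independent-looking intersections.''

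You have also placed the difficulty in the wrong tree. The third tree is easy. Apply Lemma~\ref{l:simple} to $B$ and pair the side of $e_3$ holding at least half of $A$ with the $B$-part on the other side; this already gives $g(a,b)\ge\min\{a/2,\,b/3\}-O(1)$. The paper instead applies Lemma~\ref{l:simple} to $X\cup Y$ and finishes with a pigeonhole over $X\cap Z$, $X\cap W$, $Y\cap Z$, $Y\cap W$.

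What is genuinely needed is a \emph{two-tree} statement, the paper's Lemma~\ref{l:lower-2}: a cut vector $(e_1,e_2)$ separating $X,Y$ with $\min\{|X|,|Y|\}\ge 4n/27-o(n)$ \emph{and} $|X|+|Y|\ge 4n/9-o(n)$. The sum condition is indispensable. It forces the larger set to have size at least $2n/9$, so even your crude bound $\min\{a/2,b/3\}$ then yields $2n/27$.

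Stage~3 cannot be settled by evaluating a worst-case $g$ at Jordan cuts. Take Jordan cuts with $w(X_{1,0})=1/3$ and $w(X_{2,0})=2/3$, but correlated so that $w(X_{1,0}\cap X_{2,0})=1/5$. The two pairings then have weights $(1/5,1/5)$ and $(2/15,7/15)$.
\begin{itemize}
\item For the first pairing, take a path blowup of $K_{1,3}$ with $1/15$ of each colour on each leg. Every cut has a side lying inside one leg, so every cut gives at most $1/15$.
\item For the second pairing, take a path ordered as $A^{a/2}B^{b}A^{a/2}$. Every cut gives at most $a/2=1/15$.
\end{itemize}
Both values are below $2/27$. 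The same path also refutes your claim that $g\ge 2n/27$ for the pair $(n/9,4n/9)$; there it gives only $n/18$.

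So you must leave the Jordan cuts. The paper passes to the edges $yu,yv$ at the far endpoint $y$ of $e_1$, and shows that some separated pair has both a large minimum and a large sum. That is the delicate case analysis of Section~3, with thresholds such as $bc+1/54$, $(b+c)/2-5/18$, and the split at $c=0.7a$. Your sentence about lopsided intersections making ``another pairing, or another branch, strictly better'' is precisely this lemma, stated without proof.
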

To prove Theorem \ref{t:lower-3} we shall need the following technical lemma which is proved in the next section.
\begin{lemma}\label{l:lower-2}
	Suppose that $T_1,T_2$ are trees of bounded degree $3$ each having a common subset of vertices labeled by $[n]$.
	Then there is a cut vector $(e_1,e_2)$ and disjoint subsets $X,Y$ of $[n]$ separated by the cut vector,
	where each has size at least $4n/27-o(n)$ and the sum of their sizes is at least $4n/9-o(n)$.
\end{lemma}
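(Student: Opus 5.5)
The plan is to reduce the lemma to an optimization over a small intersection matrix. We extract this matrix from the centroid structure of each tree, and then argue by cases using the flexibility of moving cut edges.

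\textbf{Setup.} For $i=1,2$, let $v_i$ be a Jordan centroid of $T_i$ with respect to the labeled set $[n]$: removing $v_i$ leaves at most three branches, each containing at most $n/2$ labeled vertices. Denote the labeled parts of these branches by $A_1,A_2,A_3$ for $T_1$ and by $B_1,B_2,B_3$ for $T_2$. These sets partition $[n]$ minus at most one vertex each, and that loss is absorbed in the $o(n)$ term. Let $m_{jk}=|A_j\cap B_k|/n$, so $\sum_{j,k} m_{jk}=1-o(1)$, and write $a_j=\sum_k m_{jk}$ and $b_k=\sum_j m_{jk}$.

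\textbf{Candidate separations.} Cutting the edge from $v_1$ into branch $j$ and the edge from $v_2$ into branch $k$ gives two separated pairs. The first pair is $A_j\cap B_k$ against $\bigcup_{j'\ne j,\,k'\ne k}A_{j'}\cap B_{k'}$. The second pair is $A_j\setminus B_k$ against $B_k\setminus A_j$, whose sizes are $a_j-m_{jk}$ and $b_k-m_{jk}$. In the balanced case $a_j=b_k=1/3$ with uniform $m_{jk}=1/9$, the second pair gives $2/9$ and $2/9$, which is well above the targets $4/27$ each and exactly $4/9$ in total. The proof should therefore be a stability or convexity argument over the matrices $(m_{jk})$.

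\textbf{Refinement by sliding cuts.} The $3\times 3$ choices alone are not enough, for example when some $a_j$ is close to $1/2$. So I would also allow moving a cut edge down into a branch. Inside a heavy branch $A_j$, walk from $v_1$ toward the heavier subtree at each step. The labeled count on the far side decreases, and in a degree-$3$ tree each step loses at most half of the current weight to the abandoned side subtree. This yields, for each branch, a nested chain of sets whose sizes are roughly dense in a useful range. It also gives extra freedom: the abandoned side subtrees can be assigned to either side of the partition. Concretely, for each tree I would record a short list of available bipartitions $\{P,[n]\setminus P\}$, namely the three branch cuts together with the cuts obtained by sliding into the largest branch until its far side drops below $n/3$.

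\textbf{Case analysis and main obstacle.} The core is to show that for every matrix $(m_{jk})$ consistent with these constraints (each $a_j,b_k\le 1/2$, plus whatever the sliding adds), some available pair of bipartitions has separated sets with minimum at least $4/27$ and sum at least $4/9$. I would argue by contradiction. Suppose every pair $(j,k)$ fails. Each failure gives a linear inequality (for the sum condition) or a disjunction of linear inequalities (for the minimum condition) in the $m_{jk}$. Summing these inequalities over well-chosen $(j,k)$, for instance over a permutation of $[3]$ or over all nine pairs, should contradict $\sum m_{jk}=1$. The constant $4/27=\tfrac{4}{9}\cdot\tfrac13$ suggests the extremal configuration has one tree with branch sizes $(1/3,1/3,1/3)$ and the other with sizes like $(1/2,1/4,1/4)$ after refinement. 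I would first test the candidate choices on that configuration to see which inequalities are tight. I expect this case analysis to be the main obstacle, especially when the largest branches of both trees have size near $1/2$, where the sliding step is needed. There I would first parametrize the slid cut by the size $s\in[1/3,1/2]$ of its far side, then optimize over $s$ using continuity (up to $o(1)$ jumps, which are bounded since each step's loss is controlled).
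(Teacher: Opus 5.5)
\textbf{There is a genuine gap: your proposal sets up a framework but does not prove the lemma.} The claim that ``some available pair of bipartitions has separated sets with minimum at least $4/27$ and sum at least $4/9$'' \emph{is} the lemma. You leave it at ``summing these inequalities \dots\ should contradict $\sum m_{jk}=1$'', and that step does not go through as stated. Failure of a candidate is a disjunction: one side is below $4/27$, \emph{or} the total is below $4/9$. So there is no single linear inequality per candidate to sum, and deciding which disjunct holds is where all the work lies. The paper resolves this with a nested case split, followed by explicit polynomial inequalities in the branch weights checked on specific regions. Your sketch anticipates none of this.

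\textbf{Your diagnosis of the difficulty is also off, and the sliding machinery is unnecessary.} Your $3\times 3$ branch cuts already suffice. Set up notation as follows:
\begin{itemize}
\item Let $T_1$ be the tree whose most balanced edge cut is the less balanced one.
\item Let its centroid branches $A_1,A_2,A_3$ have weights $a\ge d\ge c$.
\item Let $B_1$ be the heaviest centroid branch of $T_2$, of weight $1-b\le 1/2$. The edges into $A_1$ and $B_1$ are then the most balanced cuts.
\end{itemize}
The paper uses only four of your candidate pairs: $(A_1\setminus B_1,\,B_1\setminus A_1)$, $(A_1\cap B_1,\,\overline{A_1}\cap\overline{B_1})$, and $(A_j\setminus B_1,\,B_1\setminus A_j)$ for $j=2,3$. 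So only $m_{11},m_{21},m_{31}$ matter.

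The first two pairs dispose of most configurations:
\begin{itemize}
\item If $m_{11}\le a(1-b)$, the first pair has total at least $1-a-b+2ab\ge 4/9$ and smaller side at least $ab\ge 1/6$.
\item Otherwise, the second pair has total at least $a+b-2ab\ge 4/9$ and smaller side at least $a(1-b)$.
\end{itemize}
The only hard regime is therefore $m_{11}>a(1-b)$ with $a(1-b)<4/27$. This forces $1/3\le a\le 2/\sqrt{27}$ and $b\ge 1-4/(27a)$, so both trees are \emph{nearly balanced} at their centroids. That is the opposite of the ``largest branches near $1/2$'' situation you flag. Your guessed extremal pair, $(1/3,1/3,1/3)$ versus $(1/2,1/4,1/4)$, is settled immediately, since there $a(1-b)=1/6>4/27$.

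In the hard regime the paper proceeds in three splits:
\begin{enumerate}
\item Whether $\beta=w(A_3\setminus B_1)\ge bc+1/54$, or else $w(A_2\setminus B_1)\ge bd-1/54$.
\item Whether $\beta$ exceeds $(b+c)/2-5/18$ (respectively $(b+d)/2-5/18$).
\item A proportional division of $b-\beta$ between the two remaining branches, with a $1/108$ correction when $c\le 0.7a$.
\end{enumerate}
Each case is closed by checking that polynomials such as $a^2(8-27d)+4d-ad(7+27d)$ are nonpositive on regions like $1/3\le a\le 2/\sqrt{27}$, $(1-a)/2\le d\le a$. This quantitative case analysis is what your proposal is missing.
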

\begin{proof}[Proof of Theorem \ref{t:lower-3}]
	Consider the sets $X$ and $Y$ guaranteed by Lemma \ref{l:lower-2} and the corresponding cut vector $(e_1,e_2)$ separating them.
	Then we have $z := |X| + |Y| \ge 4n/9-o(n)$ and $\min\{|X|, |Y|\} \ge 4n/27-o(n)$.
	By Lemma \ref{l:simple}, applied to the common set $X \cup Y$ labeled by $[z]$, there is an edge cut $e_3$ of $T_3$ and a corresponding partition $P_{e_3}=\{W,Z\}$ of $X \cup Y$ such that $|W| \ge z/3-o(n)$ and $|Z| \ge z/3-o(n)$. Hence, $\min\{|Z|, |W|\} \ge z/3-o(n) \ge 4n/27-o(n)$.
	Let $A = X \cap Z$, $B = X \cap W$, $C = Y \cap Z$, $D = Y \cap W$.
	Since $(e_1,e_2,e_3)$ separates $A$ and $D$ and also separates $B$ and $C$, it remains to prove that
	$$
	\max \{ \min \{|A|, |D|\} \, , \min \{|B|, |C|\} \} \ge \frac{2n}{27}-o(n)\;.
	$$
	Indeed, suppose the left hand side is smaller than $2n/27-o(n)$, then one of the following holds:
	$|A|+|B|$ or $|A|+|C|$ or $|D|+|B|$ or $|D|+|C|$
	is smaller than $4n/27-o(n)$. But these are exactly the sizes of the sets
	$X$, $Z$, $W$, $Y$ which, recall, are at least $4n/27-o(n)$ each, a contradiction.
\end{proof}

We have therefore completed the proof of Theorem \ref{t:main}. \qed	

\section{Proof of Lemma \ref{l:lower-2}}

For $S \subset [n]$, it will be convenient to define its {\em weight} as $w(S):=|S|/n$. Recall that the lemma asks to find disjoint subsets $X$ and $Y$ of $[n]$ and a cut vector $(e_1,e_2)$ separating them, such that
	\begin{eqnarray}
		w(X)+w(Y)~~ \ge & \frac{4}{9}-o_n(1)\,,\label{e:c1}\\
		\frac{4}{27}-o_n(1) ~~\le & w(X) \le w(Y)\label{e:c2}\,.
	\end{eqnarray}
	Notice that to show \eqref{e:c2} given \eqref{e:c1}, it suffices to show that $2w(Y) \ge 2w(X) \ge w(Y) -o_n(1)$.
		
	Consider a cut vector $(e_1,e_2)$ with partitions
	$P_{e_1}=\{X_{1,0}, X_{1,1}\}$ and $P_{e_2}=\{X_{2,0}, X_{2,1}\}$ of $[n]$ where
	$e_1$ maximizes $\min\{w(X_{1,0}),w(X_{1,1})\}$ and
	$e_2$ maximizes $\min\{w(X_{2,0}),w(X_{2,1})\}$.
	We may assume by Lemma \ref{l:simple} that $w(X_{i,j}) \ge 1/3-o_n(1)$ for $i=1,2$ and $j=0,1$.
	Since we may rename the trees and the parts, we may assume without loss of generality that
	$w(X_{1,0}) \le w(X_{i,j})$ for $i=1,2$ and $j=0,1$ and that $w(X_{2,0}) \ge w(X_{2,1})$.
	For notational simplicity, let $a=w(X_{1,0})$ and $b=w(X_{2,0})$ so we have that
	$w(X_{1,1})=1-a$, $w(X_{2,1})=1-b$, and
	$$
	\frac{1}{3}-o_n(1) \le a \le 1-b \le \frac{1}{2} \le b \le 1-a \le \frac{2}{3}+o_n(1)\,.
	$$
	Let $A=X_{1,0} \cap X_{2,0}$, $B=X_{1,0} \cap X_{2,1}$, $C=X_{1,1} \cap X_{2,0}$ and $D=X_{1,1} \cap X_{2,1}$.
	
	Hereafter until the end of this section, and in order to simplify notation, we shall assume for simplicity that {\em all} stated equalities and inequalities hold up to $o_n(1)$. So when we write, say, $(1-b)a \le 4/27$ we shorthand for $(1-b)a \le 4/27 \pm o_n(1)$.
	
	Our proof is divided into several cases related to the quantities $a,b$ and several additional quantities that we now define.
	
	Consider the edge $e_1$ which has one endpoint in $X_{1,0}$ and the other endpoint, denoted $y$, in
	$X_{1,1}$. So $y$ has at most two neighbors in $X_{1,1}$, denoting them $u$ and $v$ ($u$ may not exist
	if $y$ has only one neighbor in $X_{1,1}$). Consider the cut of $T_1$ at edge $yu$, denoting
	the part of the cut containing $u$ by $X_{1,1,0}$ and consider the cut of $T_1$ at edge $yv$, denoting the part of the cut containing $v$ by $X_{1,1,1}$ (if there is no $u$, then let
	$X_{1,1,0} = \emptyset$). We have $X_{1,1} = \{y\} \cup X_{1,1,0} \cup X_{1,1,1}$.
	Let $c = w(X_{1,1,0})$ and $d=w(X_{1,1,1})$ and we may assume by renaming that
	$c \le d$. Also observe that $a+c+d+w(y)=1$, but recall that in our
	notation ignoring $o_n(1)$ terms (and since $w(y)=1/n$ if $y \in [n]$ and $w(y)=0$ if $y$ is not in the common subset of vertices labeled by $[n]$), we shall write $a+c+d=1$. Also observe that $a \ge d$ by our assumption that $e_1$ maximizes $\min\{w(X_{1,0}),w(X_{1,1})\}$.
	Figure \ref{f:trees} illustrates our notations.
	
	\begin{figure}[!ht]
		\includegraphics[scale=0.6,trim=-120 300 395 120, clip]{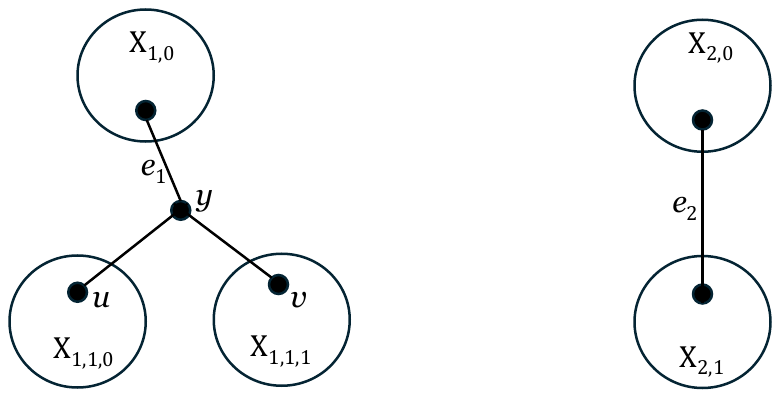}
		\caption{The trees $T_1$ (left) and $T_2$ (right), the cut vector $(e_1,e_2)$,
			and the partition sets; recall that $X_{1,1} = \{y\} \cup X_{1,1,0} \cup X_{1,1,1}$.}
		\label{f:trees}
	\end{figure}  
	
	We now list our cases (and subcases therein) and then prove \eqref{e:c1} and \eqref{e:c2} for each of them. It will be immediate from the sequel that these cover all possible cases,  and the necessity of this case partition. Notice that some of the constants defined in this list arise naturally from optimization, while others, such as the constant 0.7 in Case 2.2.2.2.1., are ad hoc choices (i.e., they are somewhat lenient, yet neccesary for range separation). 
	
	\begin{tabular}{@{}ll@{}}
	 1. &  $w(A) \ge ab$.\vspace{1mm}\\
	 2. &  $w(A) \le ab$.\vspace{1mm}\\
	 2.1. & $a(1-b) \ge \frac{4}{27}$.\vspace{1mm}\\
	 2.2. &  $a(1-b) \le \frac{4}{27}$ (so $a \le \frac{2}{\sqrt{27}}$ and $b \ge 1-4/27a$).\vspace{1mm}\\
	 2.2.1. & $w(X_{1,1,0} \cap X_{2,0}) := \beta \ge bc+\frac{1}{54}$.\vspace{1mm}\\
     2.2.1.1. & $\beta \ge (b+c)/2-\frac{5}{18}$.\vspace{1mm}\\
     2.2.1.2. & $\beta \le (b+c)/2-\frac{5}{18}$.\vspace{1mm}\\
     2.2.1.2.1. & $w(A) \ge (b-\beta)\frac{a}{a+d}$.\vspace{1mm}\\
     2.2.1.2.2. & $w(X_{1,1,1} \cap X_{2,0}) \ge (b-\beta)\frac{d}{a+d}$.\vspace{1mm}\\
     2.2.2. & $w(X_{1,1,1} \cap X_{2,0}) := \beta \ge bd-\frac{1}{54}$.\vspace{1mm}\\
     2.2.2.1. & $\beta \ge (b+d)/2-\frac{5}{18}$.\vspace{1mm}\\
     2.2.2.2. & $\beta \le (b+d)/2-\frac{5}{18}$.\vspace{1mm}\\
     2.2.2.2.1. & $w(A) \ge (b-\beta)\frac{a}{a+c} - z$ where $z=0$ if $c \ge 0.7a$ and $z=\frac{1}{108}$ if $c \le 0.7a$.\vspace{1mm}\\
     2.2.2.2.2. & $w(X_{1,1,1} \cap X_{2,0}) \ge (b-\beta)\frac{c}{a+c} + z$.
	\end{tabular}

\vspace{3mm}
	{\bf Case 1}:
	Here we assume $w(A) \ge ab$.
	Then $w(C) = b- w(A) \le b-ab=b(1-a)$, $w(D)=1-a-w(C) \ge 1-a-b(1-a) = (1-a)(1-b)$.
	Notice that $A$ and $D$ are separated by $(e_1,e_2)$.
	Then we have $w(A)+w(D) \ge ab+(1-a)(1-b)=1+2ab-a-b \ge 4/9$ since
	$a,b$ are both between $1/3$ and $2/3$, proving \eqref{e:c1}.
	Also notice that $ab \le (1-a)(1-b)$ since $a \le 1-b$. 
	So, to obtain \eqref{e:c2} it remains to prove that $ab \ge 4/27$ which clearly holds since $ab \ge 1/6$.
	
	{\bf Case 2}: Here we assume $w(A) \le ab$, and since $w(A)+w(B)=a$ we have $w(B) \ge a(1-b)$ and $w(C) \ge b(1-a)$. Notice that $B$ and $C$ are separated by $(e_1,e_2)$.
	Then we have $w(B)+w(C) \ge b+a-2ba \ge 4/9$ since
	$a,b$ are both between $1/3$ and $2/3$, proving \eqref{e:c1}.
	Also notice that $a(1-b) \le b(1-a)$ since $a \le 1-a$ and $b \ge 1-b$. 
	So, to obtain \eqref{e:c2} it remains to prove that $a(1-b) \ge 4/27$.
	However, note that this does not necessarily hold.
	
	{\bf Case 2.1}: $a(1-b) \ge 4/27$. Then by the previous line, we also have \eqref{e:c2}.
	
	{\bf Case 2.2}: This is the major subcase of the remainder of the proof, in which we assume
	$w(A) \le ab$ and $a(1-b) \le 4/27$. Notice that this implies that $a \le 2/\sqrt{27}=0.3849...$ and $b \ge 1-4/27a$, which we shall often use.

	Since $w(A) \le ab$ and since $b(a+c+d)=b$, we either have Case 2.2.1 where $w(X_{1,1,0} \cap X_{2,0}) \ge bc+1/54$ or
	else have Case 2.2.2 where $w(X_{1,1,1} \cap X_{2,0}) \ge bd-1/54$.
	
	{\bf Case 2.2.1}: Here we assume that $w(X_{1,1,0} \cap X_{2,0}):=\beta \ge bc+1/54$.
	The cut vector $(yu,e_2)$ separates $X_{1,1,0} \cap X_{2,0}$ and
	$(X_{1,0} \cup X_{1,1,1}) \cap X_{2,1}$.
	Also note that $w((X_{1,0} \cup X_{1,1,1}) \cap X_{2,1}) = a+d-(b-\beta)=1-c-b+\beta$.
	
	We next show that any of the weights of these two separated parts, namely $\beta$
	and $1-c-b+\beta$ are within a factor of $2$ of each other. Recall that this suffices in order to prove prove \eqref{e:c2} assuming \eqref{e:c1} holds.
	So, we first show that $2\beta \ge 1-c-b+\beta$ or, equivalently, $\beta \ge 1-c-b$.
	Since $\beta \ge bc+1/54$, it suffices to show that $bc+1/54 \ge 1-c-b$,
	equivalently $b \ge (1-c-1/54)/(1+c)$.
	Now, since $c \le d \le a$ and $a+c+d=1$, we have that $c \ge 1-2a$ so it suffices to prove that
	$ b \ge (a-1/108)/(1-a)$. But recall that 
	$b \ge 1-4/27a$ so it suffices to prove that
	$1-4/27a \ge (a-1/108)/(1-a)$. This translates to showing that $54a^2-31.25a+4 \le 0$ which holds
	for all $0.2 \le a \le 0.3865$. But recall that we have $1/3 \le a \le 2/\sqrt{27} = 0.3849...$, so the claim follows.
	We also need to show that $\beta \le 2(1-c-b+\beta)$.
	Equivalently $\beta \ge 2(b+c-1)$. It suffices to prove that $bc + 1/54 \ge 2b+2c-2$. But $c \le a \le 1-b$ so $2b+2c-2 \le 2b+2(1-b)-2=0$.
	
	We have shown that the weights of these two separated parts, $\beta$
	and $1-c-b+\beta$, are within a factor of $2$ of each other, but we still need to show that 
	the sum of their weights is at least $4/9$ to also obtain \eqref{e:c1}.
	So, we must show that $1-c-b+2\beta \ge 4/9$. However, this only holds if $\beta \ge 2/9+b/2-1/2+c/2 = (b+c)/2 - 5/18$.
	
	{\bf Case 2.2.1.1}: $\beta \ge (b+c)/2 - 5/18$, so as mentioned in the previous line,
	we also have \eqref{e:c1}.
	
	{\bf Case 2.2.1.2}: Here we assume that $\beta \le (b+c)/2 -5/18$.
	Since $w(A)+w(X_{1,1,1} \cap X_{2,0})=b-\beta$, we either have Case 2.2.1.2.1 that $w(A) \ge (b-\beta)a/(a+d)$ or else have Case 2.2.1.2.2 that $w(X_{1,1,1} \cap X_{2,0}) \ge  (b-\beta)d/(a+d)$.
	
	{\bf Case 2.2.1.2.1, proving \eqref{e:c1}}: Here we assume that $w(A) \ge (b-\beta)a/(a+d)$.
	Then $w(D) \ge c+d-(b-(b-\beta)a/(a+d)) = c+d-b+(b-\beta)a/(a+d)$.
	Since $A$ and $D$ are separated by $(e_1,e_2)$, we have two separated parts with respective sizes
	at least $(b-\beta)a/(a+d)$ and at least $c+d-b+(b-\beta)a/(a+d)$.
	Then we have $w(A)+w(D) \ge c+d-b+2ba/(a+d)-2\beta a/(a+d)$.
	Using $\beta \le (b+c)/2 -5/18$, we therefore have that
	$w(A)+w(D) \ge c+d-b+2ba/(a+d)-(2a/(a+d))((b+c)/2-5/18)$.
	Substituting $c=1-a-d$ we have that $w(A)+w(D) \ge (5a-9bd+9d)/9(a+d)$ and we have to prove it is at least $4/9$.
	This translates to proving $5a-9bd+9d \ge 4a+4d$ equivalently $a+5d \ge 9bd$,
	equivalently $a \ge d(9b-5)$. Suffices to prove that $a \ge a(9b-5)$,
	equivalently $b \le 2/3$, which always holds.
	
	{\bf Case 2.2.1.2.2, proving \eqref{e:c1}}: Here we assume that $w(X_{1,1,1} \cap X_{2,0}) \ge  (b-\beta)d/(a+d)$.
	Then $w((X_{1,0} \cup X_{1,1,0}) \cap X_{2,1}) \ge c+a-b+(b-\beta)d/(a+d))$.
	Since $X_{1,1,1} \cap X_{2,0}$ and $(X_{1,0} \cup X_{1,1,0}) \cap X_{2,1}$ are separated
	by $(yv,e_2)$, we have two separated parts with respective sizes
	at least $(b-\beta)d/(a+d)$ and at least $c+a-b+(b-\beta)d/(a+d)$.
	Then we have $w(X_{1,1,1} \cap X_{2,0}) + w((X_{1,0} \cup X_{1,1,0}) \cap X_{2,1}) \ge c+a-b+2bd/(a+d)-2\beta d/(a+d)$.
	Using $\beta \le (b+c)/2 -5/18$, we therefore have that this sum is at least 
	$c+a-b+2bd/(a+d)-(2d/(a+d))((b+c)/2-5/18)$.
	Substituting $c=1-a-d$ this expression is equivalent to
	$(5d-9ba+9a)/9(a+d)$ and we have to prove it is at least $4/9$.
	This translates to proving $d \ge a(9b-5)$ or
	$b \le (d/a+5)/9$. But we know that $b \le 1-a$ and $1-a=d+c \le 2d$ so $d \ge (1-a)/2$, hence it suffices to prove
	that $1-a \le ((1-a)/2a+5)/9$ which indeed holds for all $a \ge 1/3$.
	
	{\bf Cases 2.2.1.2.1 and 2.2.1.2.2, proving \eqref{e:c2}}: 
	Notice that in 2.2.1.2.1, the size
	$(b-\beta)a/(a+d)$ is not larger than the size $c+d-b+(b-\beta)a/(a+d)$
	since $c+d-b=1-a-b \ge b-b \ge 0$ and in 2.2.1.2.2, the size $(b-\beta)d/(a+d)$ is not larger
	than the size $c+a-b+(b-\beta)d/(a+d))$ since $c+a-b \ge c+d-b =1-a-b \ge b-b \ge 0$.
	Furthermore, since $(b-\beta)d/(a+d)$ is not larger than $(b-\beta)a/(a+d)$, to prove
	\eqref{e:c2} in both cases it only remains to show that $(b-\beta)d/(a+d) \ge 4/27$.
	As $\beta \le (b+c)/2 -5/18$, it suffices to prove that
	$(b+5/18-(b+c)/2)d/(a+d) \ge 4/27$, equivalently $(b/2-c/2+5/18)d/(a+d) \ge 4/27$,
	equivalently $d(9a+9b+9d-4) \ge (8/3)(a+d)$. 
	Recalling that $b \ge 1-4/27a$, this is equivalent to proving that the surface
	$F(a,d)=a^2(8 - 27 d) + 4 d - a d (7 + 27 d)$ is not positive
	for $1/3 \le  a \le 2/\sqrt{27}$ and $(1-a)/2 \le d \le a$.
	It is straightforward to verify that $\nabla F \neq (0,0)$ in this region,
	hence the maximum of $F(a,d)$ is attained on the boundary.
	When $d=a$ we have $F(a,a)=-54 a^3 + a^2 + 4 a$ which is negative for
	$1/3 \le  a \le 2/\sqrt{27}$ and when $d=(1-a)/2$ we have
	$F(a,(1-a)/2)=(8 - 49 a + 46 a^2 + 27 a^3)/4$ which is also negative for
	$1/3 \le  a \le 2/\sqrt{27}$. When $a=1/3$ we must have $d=1/3$
	and $F(1/3,1/3)$ is negative. When $a=2/\sqrt{27}$ we have
	$F(d,2/\sqrt{27})=-\frac{2}{27}(-16 + 21 \sqrt{3}d + 81\sqrt{3}d^2)$
	which is always negative for $d \ge 1/2-1/\sqrt{27}$.
	
	{\bf Case 2.2.2}:  Here we assume that $w(X_{1,1,1} \cap X_{2,0})=\beta \ge bd-1/54$.
	The cut vector $(yv,e_2)$ separates $X_{1,1,1} \cap X_{2,0}$ and
	$(X_{1,0} \cup X_{1,1,0} ) \cap X_{2,1}$.
	Also note that $w((X_{1,0} \cup X_{1,1,0}) \cap X_{2,1}) = a+c-(b-\beta)=1-d-b+\beta$.

	We next show that any of the weights of these two separated parts, namely $\beta$
	and $1-d-b+\beta$ are within a factor of $2$ of each other. Notice that this suffices in order to prove prove \eqref{e:c2} assuming \eqref{e:c1} holds.
	So, we first show that $2\beta \ge 1-d-b+\beta$ or, equivalently, $\beta \ge 1-d-b$.
	Since $\beta \ge bd-1/54$, it suffices to show that $bd-1/54 \ge 1-d-b$,
	equivalently $b \ge (1-d+1/54)/(1+d)$.
	Now, since $d \ge (1-a)/2$, it suffices to prove that
	$b \ge (1+a+1/27)/(3-a)$. But recall that $b \ge 1-4/27a$ so it suffices to prove that
	$1-4/27a \ge (1+a+1/27)/(3-a)$. This translates to showing that $54a^2-57a+12 \le 0$ which
	indeed holds for all $0.291 \le a \le 0.765$ so in particular in our range of interest.
	We also need to show that $\beta \le 2(1-d-b+\beta)$.
	Equivalently $\beta \ge 2(b+d-1)$. It suffices to prove that $bd - 1/54 \ge 2b+2d-2$.
	But $d \le a \le 1-b$ so $2b+2d-2 \le 2b+2(1-b)-2=0$ while $b \ge 1/2$ and $d \ge (1-a)/2 \ge 1/6$
	so $bd \ge 1/12 \ge 1/54$.
	
	We have shown that the weights of these two separated parts, $\beta$
	and $1-d-b+\beta$, are within a factor of $2$ of each other, but we still need to show that 
	the sum of their weights is at least $4/9$ to also obtain \eqref{e:c1}.
	So, we must show that $1-d-b+2\beta \ge 4/9$. However, this only holds if
	$\beta \ge 2/9+b/2-1/2+d/2 = (b+d)/2 - 5/18$.
	
	{\bf Case 2.2.2.1}: $\beta \ge (b+d)/2 - 5/18$, so as mentioned in the previous line,
	we also have \eqref{e:c1}.
	
	{\bf Case 2.2.2.2}: Here we assume that $\beta \le (b+d)/2 -5/18$.	
	Since $w(A)+w(X_{1,1,0} \cap X_{2,0})=b-\beta$, we either have Case 2.2.2.2.1 that $w(A) \ge (b-\beta)a/(a+c)-z$ or else have Case 2.2.2.2.2 that $w(X_{1,1,0} \cap X_{2,0}) \ge  (b-\beta)c/(a+c)+z$ where we define $z$ to be $0$ if $c \ge 0.7a$ and define $z$ to be $1/108$ if $c \le 0.7a$.
	
	{\bf Case 2.2.2.2.1, proving \eqref{e:c1}}: Here we assume that $w(A) \ge (b-\beta)a/(a+c)-z$.
	Then $w(D) \ge c+d-(b-(b-\beta)a/(a+c)+z) = c+d-b+(b-\beta)a/(a+c)-z$.
	Since $A$ and $D$ are separated by $(e_1,e_2)$, we have two separated parts with respective sizes
	at least $(b-\beta)a/(a+c)-z$ and at least $c+d-b+(b-\beta)a/(a+c)-z$.
	Then we have $w(A)+w(D) \ge c+d-b+2ba/(a+c)-2\beta a/(a+c)-2z$.
	Using $\beta \le (b+d)/2 -5/18$, we therefore have that
	$w(A)+w(D) \ge c+d-b+2ba/(a+c)-(2a/(a+c))((b+d)/2-5/18)-2z$.
	Substituting $d=1-a-c$ we have that $w(A)+w(D) \ge (5a-9bc+9c)/9(a+c)-2z$ and we have to prove it is at least $4/9$.
	Consider first the case $c \ge 0.7a$ where we have $z=0$.
	In this case, this translates to proving $5a-9bc+9c \ge 4a+4c$ equivalently $a+5c \ge 9bc$,
	equivalently $a \ge c(9b-5)$. Suffices to prove that $a \ge a(9b-5)$,
	equivalently $b \le 2/3$, which always holds.
	Consider next the case $c \le 0.7a$ where we have $z=1/108$.
	In this case, this translates to proving $5a-54bc+29c \ge 0$ equivalently
	$5a \ge c(54b-29)$, so suffices to prove that $5 \ge 0.7(54b-29)$ which indeed holds for all
	$b \le 0.669$ and recall that in our case $b \le 2/3$.
	
	{\bf Case 2.2.2.2.2, proving \eqref{e:c1}}: 
	Here we assume that $w(X_{1,1,0} \cap X_{2,0}) \ge  (b-\beta)c/(a+c)+z$.
	Then $w((X_{1,0} \cup X_{1,1,1}) \cap X_{2,1}) \ge d+a-b+(b-\beta)c/(a+c))+z$.
	Since $X_{1,1,0} \cap X_{2,0}$ and $(X_{1,0} \cup X_{1,1,1}) \cap X_{2,1}$ are separated
	by $(yu,e_2)$ we have two separated parts with respective sizes
	at least $(b-\beta)c/(a+c)+z$ and at least $d+a-b+(b-\beta)c/(a+c))+z$.
	Then we have $w(X_{1,1,0} \cap X_{2,0}) + w(w((X_{1,0} \cup X_{1,1,1}) \cap X_{2,1})) \ge d+a-b+2bc/(a+c)-2\beta c/(a+c)+2z$.
	Using $\beta \le (b+d)/2 -5/18$, we therefore have that this sum is at least $d+a-b+2bc/(a+c)-(2c/(a+c))((b+d)/2-5/18)+2z$.
	Substituting $d=1-a-c$ this expression is equivalent to
	$(5c-9ba+9a)/9(a+c)+2z$ and we have to prove it is at least $4/9$.
	As $z \ge 0$, it suffices to prove that $(5c-9ba+9a)/9(a+c) \ge 4/9$
	which translates to proving $c \ge a(9b-5)$ or
	$b \le (c/a+5)/9$. But we know that $b \le 1-a$ and $c \ge 1-2a$ so it suffices to prove
	that $1-a \le ((1-2a)/a+5)/9$ which translates to $9a^2-6a+1 \ge 0$ which always holds.
		
	{\bf Cases 2.2.2.2.1 and 2.2.2.2.2, proving \eqref{e:c2}}: 
	Notice that in 2.2.2.2.1, the size
	$(b-\beta)a/(a+c)-z$ is not larger than the size $c+d-b+(b-\beta)a/(a+c)-z$
	since $c+d-b=1-a-b \ge b-b \ge 0$ and in 2.2.2.2.2, the size $(b-\beta)c/(a+c)+z$ is not larger
	than the size $d+a-b+(b-\beta)c/(a+c))+z$ since $d+a-b \ge c+d-b =1-a-b \ge b-b \ge 0$.
	It remains to prove that $(b-\beta)a/(a+c)-z \ge 4/27$ and that
	$(b-\beta)c/(a+c)+z \ge 4/27$.
	
	Consider first the case $c \ge 0.7a$ in which case $z=0$.
	Since $a \ge c$, it suffices to prove that $(b-\beta)c/(a+c) \ge 4/27$.
	As $\beta \le (b+d)/2 -5/18$, it suffices to prove that
	$(b+5/18-(b+d)/2)c/(a+c) \ge 4/27$, equivalently $(b/2-d/2+5/18)c/(a+c) \ge 4/27$,
	Using $d=1-a-c$ this is equivalent to proving $c(9a+9b+9c-4) \ge (8/3)(a+c)$. 
	Recalling that $b \ge 1-4/27a$, this is equivalent to proving that the surface
	$F(a,c)=a^2(8 - 27 c) + 4 c - a c (7 + 27 c)$ is not positive
	for $1/3 \le  a \le 2/\sqrt{27}$ and $\max\{1-2a,0.7a\} \le c \le (1-a)/2$.
	Since $\max\{1-2a,0.7a\}$ is minimized when $a=10/27$ we have two possible regions to check:
	The region $10/27 \le a  \le 2/\sqrt{27}$ and $0.7a \le c \le (1-a)/2$ or the region
	$1/3 \le a \le 10/27$ and $1-2a \le c \le (1-a)/2$.
	It is straightforward to verify that $\nabla F \neq (0,0)$ in both regions,
	hence the maximum of $F(a,c)$ is attained on the boundary.
	When $c=0.7a$ we have $F(a,0.7a)=a(2.8 + 3.1 a - 32.13 a^2)$ which is negative for
	all $a \ge 10/27$.
	When $c=(1-a)/2$ we have $F(a,(1-a)/2)=(1/4)(8 - 49 a + 46 a^2 + 27 a^3)$
	which is negative for $0.21 \le a \le 0.56$ which contains the interval $10/27 \le a  \le 2/\sqrt{27}$.
	When $c=1-2a$ we have $F(a,1-2a)=4 - 42 a + 103 a^2 - 54 a^3$
	which is negative for $0.14 \le a \le 0.383$ which contains the interval
	$1/3 \le a \le 10/27$.
	When $a=2/\sqrt{27}$ we have $1.4/\sqrt{27} \le c \le 1/2-1/\sqrt{27}$ 
	and $F(2/\sqrt{27},c)=-(2/27)(-16 + 21 \sqrt{3} c + 81 \sqrt{3} c^2)$
	which is negative for all $c \ge 0.23$ and in particular for $c \ge 1.4/\sqrt{27}$.
	When $a=10/27$ we have $7/27 \le c \le 17/54$
	and $F(10/27,c)=800/729 - (62 c)/27 - 10 c^2$ which is negative for all $c \ge 0.236$ 
	and in particular for $c \ge 7/27$.	
	
	Consider next the case $c \le 0.7a$ in which case $z=1/108$,
	where we must prove that $(b-\beta)a/(a+c)\ge 4/27+1/108$ and that
	$(b-\beta)c/(a+c)\ge 4/27-1/108$. Since $c/a \le 0.7 \le (4/27-1/108)/(4/27+1/108)$,
	it suffices to prove that $(b-\beta)c/(a+c)\ge 4/27-1/108=15/108$.
	As $\beta \le (b+d)/2 -5/18$, it suffices to prove that
	$(b+5/18-(b+d)/2)c/(a+c) \ge 15/108$, equivalently $(b/2-d/2+5/18)c/(a+c) \ge 15/108$.
	Using $d=1-a-c$ and $b \ge 1-4/27a$, , this is equivalent to proving that the surface
	$F(a,c)=8c-3a^2(18c-5)-3ac(5+18c)$ is not positive for
	for $1/3 \le  a \le 2/\sqrt{27}$ and $1-2a \le c \le \min\{0.7a,(1-a)/2\}$.
	Notice that $(1-a)/2 \ge 0.7a$ for $a \le 5/12$ so in particular for $a \le 2/\sqrt{27}$
	so suffices to consider $1-2a \le c \le 0.7a$. Also notice that $1-2a \le 0.7a$ whenever
	$a \ge 10/27$ so suffices to assume $10/27 \le  a \le 2/\sqrt{27}$.
	It is straightforward to verify that $\nabla F \neq (0,0)$ in this region,
	hence the maximum of $F(a,c)$ is attained on the boundary.
	when $c=0.7a$ we have $F(a,0.7a)=a(-64.26 a^2 + 4.5 a + 5.6)$ which is negative for all
	$a \ge 0.3323$. For $c=1-2a$ we have $F(a,1-2a)=-108 a^3 + 207 a^2 - 85 a + 8$
	which is negative for $0.136 \le a \le 0.392$.
	For $a=10/27$ we must have $c=7/27$ and $F(10/27,7/27) < 0 $.
	For $a=2/\sqrt{27}$ we have $0.230... = 1-4/\sqrt{27} \le c \le 1.4/\sqrt{27}$
	and $F(2/\sqrt{27},c) = -12 \sqrt{3}c^2 - (10c)/\sqrt{3}+ 20/9$ which is negative for all
	$c \ge 0.21$.
	
	We have now completed the proof of \eqref{e:c1} and \eqref{e:c2} in all cases, so Lemma \ref{l:lower-2} follows. \qed
	
\section{Unrooted phylogenetic trees}

In this section we prove Theorem \ref{t:taxa} which, as we have already mentioned, follows from
Theorem \ref{t:main} by some relatively minor modifications. In fact, we need modifications
in both the upper and lower bounds.

\begin{proof}[Proof of Theorem \ref{t:taxa}]

We start with the lower bound where we need to show that $g(2,n) \ge n/6$ and $g(3,n) \ge 2n/27$.
Recall from the introduction that we already have $g(k,n) \ge nf(3,k)(1+o_n(1))$, in particular,
$g(2,n) \ge (1+o_n(1))n/6$ and $g(3,n) \ge (1+o_n(1))2n/27$, so we only need to eliminate
the $o_n(1)$ error term. It is easy to see that this error term stems from three locations:
Lemma \ref{l:simple}, the proof of Theorem \ref{t:lower-3} (which uses Lemma \ref{l:lower-2})
and the weight of vertex $y$ in the proof of Lemma \ref{l:lower-2} (see again Figure \ref{f:trees}) which may not be zero in that setting. So let us see that in the setting of unrooted phylogenetic trees, there are corresponding variants with no error term.

Starting with the variant for Lemma \ref{l:simple}, its error-term free analogue here is the following:
Let $T$ be an unrooted phylogenetic tree on $n$ leaves (taxa) labeled with $[n]$.
Then we must show that there is always an edge cut $e$ such that the number of leaves in each part of the partition $P_e$ is at least $n/3$. Indeed, this is the well known $(1/3,2/3)$ property of
phylogenetic trees whose proof can be seen, e.g., in Lemma 3.1 of \cite{SY-2012}.
Now, given this error-term free analogue of Lemma \ref{l:simple}, we immediately
obtain the error-term free analogue of Proposition \ref{prop:lower}, which is the following:
Suppose that $T_1,T_2$ are two unrooted phylogenetic trees on common taxa set $[n]$. Then there is a cut vector
$(e_1,e_2)$ and disjoint taxa sets $X,Y$ each with at least $n/6$ elements, that are separated by the cut vector. Hence, $g(2,n) \ge n/6$.

Continuing with the issue of vertex $y$ in the proof of Lemma \ref{l:lower-2}, notice that since $y$ is {\em not} a leaf, then, in the phylogenetic setting, $y$ does not belong to the subset of vertices common by $[n]$ (as only the leaves are common), hence $w(y)=0$, so the following error-term free analogue of Lemma \ref{l:lower-2} (which, in turn, also uses the aforementioned error-term free analogue of Lemma \ref{l:simple}) is:
Suppose that $T_1$ and $T_2$ are two unrooted phylogenetic trees on common taxa set $[n]$.
Then there is a cut vector $(e_1,e_2)$ and disjoint sets of taxa $X,Y$ separated by the cut vector,
where $|X| \ge 4n/27$, $|Y|\ge 4n/27$, and $|X \cup Y| \ge 4n/9$.
Finally notice that in the proof of Theorem \ref{t:lower-3}, once we use the error-term free analogue
of Lemma \ref{l:lower-2}, we obtain the error-term free analogue of Theorem \ref{t:lower-3}:
Suppose that $T_1,T_2,T_3$ are unrooted phylogenetic trees on common taxa set $[n]$.
Then there is a cut vector
$(e_1,e_2,e_3)$ and disjoint taxa sets $X,Y$ each with at least $2n/27$ elements, that are separated by the cut vector. Hence, $g(3,n) \ge 2n/27$.

As for the upper bound, we must prove that $g(2,n) \le (1+o_n(1))n/6$ and
$g(3,n) \le (1+o_n(1))2n/27$. We cannot directly use the constructions in 
the proof of Theorem \ref{t:upper} since the balanced path blowups used there contain non-leaves that are labeled by $[n]$ and, furthermore, they are not phylogenetic trees as they have (many) internal vertices of degree $2$.
Nevertheless, we can slightly modify them to obtain an analogous construction for
phylogenetic trees.

An {\em $n$-leaf caterpillar} is obtained attaching a pendant edge (and hence a pendant leaf) to each internal vertex of an $n$-vertex path. An {\em end leaf} of a caterpillar is an end vertex of a longest path of the caterpillar. Observe that caterpillars
are unrooted phylogenetic trees; in fact, they are an important class of phylogenetic trees
 --- for instance, they are used to prove the NP-hardness of the well-known maximum quartet compatibility problem \cite{steel-1992}
and they are the only general class of unrooted phylogenetic trees for which the longstanding Bandelt-Dress Conjecture has been resolved \cite{ANS-2016}.

Suppose that $H$ is a fixed unrooted phylogenetic tree on $\ell$ leaves and suppose that $n \ge \ell$. Let $\L(H)$ be the set of leaves of $H$.
Suppose that $\sum_{v \in \L(H)}x_v = n$ where each $x_v$ is a positive integer.
Let $v \in \L(H)$ and let $u$ be the neighbor of $v$ in $H$.
Replace $v$ by taking a caterpillar on $x_v+1$ leaves, identifying one of its end vertices with $u$,
See Figure \ref{f:caterpillars}.
We call the new tree a {\em caterpillar blowup} of $H$. If all $x_v$ are equal (hence equal to $n/\ell$)
we call the new tree a {\em balanced caterpillar blowup} of $H$. Notice that a caterpillar blowup has $n$ leaves and is an unrooted phylogenetic tree.

\begin{figure}[!ht]
\includegraphics[scale=0.6,trim=50 380 90 120, clip]{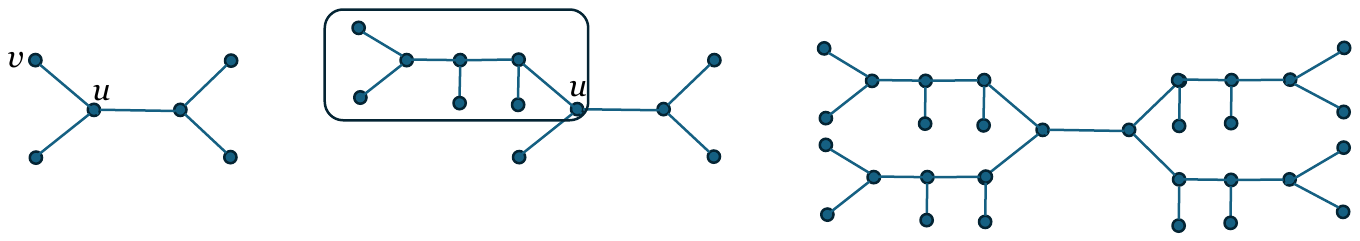}
\caption{An unrooted phylogenetic tree $H$ on the left with $\ell=4$ leaves. The leaf $v$ is replaced with a $5$-leaf caterpillar (circled) to obtain the tree in the middle; here $x_v=4$. The tree on the right is a balanced caterpillar blowup of $H$.}
\label{f:caterpillars}
\end{figure}  

Now proceed similar to the proof of Theorem \ref{t:upper}. Let $H_1$ be obtained from $K_{1,3}$ by splitting each leaf into two leaves (so $H_1$ is an unrooted phylogenetic tree with $6$ leaves).
Let $T_1$ be a balanced caterpillar blowup of $H_1$ having $n$ leaves (so $6$ divides $n$).
Let $H_2$ be the balanced double star on $4$ leaves (i.e., the quartet tree identical to the tree on the left of Figure \ref{f:caterpillars}.) Let $T_2$ be a balanced caterpillar blowup of $H_2$ having $n$ leaves (so also $4$ divides $n$, altogether $12$ divides $n$).
Randomly and bijectively assign the label set $[n]$ to the leaves of $T_1$ and, independently, also
to the leaves of $T_2$.
Exactly as in the proof of Theorem \ref{t:upper}, we have, whp, that $f(\{T_1,T_2\},n) \le n/6 +o(n)$
In turn, this implies that $g(2,n) \le (1+o_n(1))n/6$.
Similarly, take three copies $T_1,T_2,T_3$ of the tree $T_1$ in the previous construction.
Randomly and bijectively assign the label set $[n]$ to the leaves of $T_i$, independently for each $i=1,2,3$.
Exactly as in the proof of Theorem \ref{t:upper}, we have, whp, that $f(\{T_1,T_2,T_3\},n) \le 2n/27 +o(n)$. In turn, this implies that $g(3,n) \le (1+o_n(1))2n/27$.
\end{proof}

\section{Concluding remarks and open problems}\label{sec:concluding}

\begin{itemize}
\item 
As noted in the introduction, $f(r,k)$ is well-defined since $f(r,k,n)/n$ tends to a limit as $n$ goes to infinity.
\begin{proposition}\label{prop:limit}
	$\lim_{n \rightarrow \infty} f(r,k,n)/n$ exists.
\end{proposition}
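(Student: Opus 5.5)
Since $0\le f(r,k,n)/n\le 1$, it suffices to show $\liminf_n f(r,k,n)/n \ge \limsup_n f(r,k,n)/n$. The plan is to show that $f(r,k,\cdot)$ is approximately monotone under restricting the label set. Concretely, we will prove that for $m\le n$,
$$
f(r,k,m)\ \ge\ f(r,k,n)-(n-m),
$$
and also a complementary approximate superadditivity-type bound coming from blowing up labels. Together these give convergence.

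\textbf{Step 1 (dropping labels).} Take any family $\T'$ of $k$ trees of maximum degree at most $r$ sharing a common labeled set $[m]$. Extend it to a family $\T$ on $[n]$. We do this in each tree by attaching $n-m$ new labeled vertices as a path hanging off a leaf. This keeps the maximum degree at most $r$ when $r\ge 2$; the degree-$1$ leaf becomes degree $2$. Any cut vector of $\T$ with separated sets $X,Y$ of size at least $f(r,k,n)$ induces a cut of $\T'$. If an edge lies on an added path, replace it with the attaching edge. The sets $X\cap[m]$ and $Y\cap[m]$ remain separated, since moving the cut along the pendant path only changes which new vertices lie on each side. Hence $f(\T',m)\ge f(r,k,n)-(n-m)$.

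\textbf{Step 2 (splitting labels).} Given a family $\T$ on $[n]$ with $n=qm+s$, $0\le s<m$, we construct a family $\T'$ on $[m]$ in which each labeled vertex $t\in[m]$ is replaced by a cluster of $q$ labeled vertices. In each tree, replace $t$ by a path $t_1,\dots,t_q$, attaching the original neighbours of $t$ at $t_1$. This raises degrees by at most one, so it needs care. Alternatively, keep $t$ unlabeled and hang the path off it, which works only if $t$ has a free degree slot. The obstacle is doing this without exceeding degree $r$. The fix is to first note that a vertex $t$ of degree $r$ can be subdivided: insert a new unlabeled vertex on one incident edge, move $t$'s label onto that vertex's pendant path, and so on. A cleaner device is to replace $t$ by itself and put the path $t_1\ldots t_q$ inside one of $t$'s incident edges. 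This keeps degrees at most $r$ and makes the cluster contiguous on an edge. Any cut of the blown-up trees cuts each cluster path at most once, so each cut splits at most one cluster per tree, hence at most $k$ clusters in total. Mapping a separating cut in the blown-up family back to the original (move cut edges inside cluster paths to the adjacent original edge), the separated sets lose at most $kq$ labeled vertices and shrink by a factor $q$. The leftover $s$ labels are handled by Step 1. This gives $f(r,k,qm+s)\ge q\,f(r,k,m)-O(kq)-s$ style bounds, i.e.
$$
\frac{f(r,k,n)}{n}\ \ge\ \frac{f(r,k,m)}{m}-O\!\Big(\frac{k}{m}+\frac{m}{n}\Big).
$$

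\textbf{Step 3 (conclusion).} Fix $m$ with $f(r,k,m)/m$ within $\varepsilon$ of $\limsup$. Letting $n\to\infty$ in the inequality of Step 2 gives $\liminf_n f(r,k,n)/n\ge \limsup-\varepsilon-O(k/m)$. Since $m$ can be chosen arbitrarily large along a sequence realizing the $\limsup$, the limit exists. The main obstacle is Step 2. There one must check carefully that the inverse direction is sound: the optimal family witnessing $f(r,k,n)$ need not be a blowup, so it must be the \emph{lower} bound on $f(\cdot,n)$ that gets transferred. The argument therefore has to map any $n$-family down to an $m$-family by merging labels into groups of about $n/m$. If the direction above proves inconvenient, I would reorganize Step 2 as follows. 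Given an arbitrary $n$-family, choose a uniformly random subset of $m$ labels. Cuts of the restricted family lift to cuts of the original family. By a concentration argument in the spirit of Lemma \ref{l:random}, whp every cut vector's separated sets scale by $m/n$ up to $o(m)$. This yields $f(r,k,n)/n\ge f(r,k,m)/m-o(1)$ as $m\to\infty$, which suffices.
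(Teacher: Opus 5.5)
\textbf{Gap: Step~2 proves the reverse of the inequality you state, so Step~3 as written does not follow.}

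The cluster blowup turns a family on $[m]$ into one on $[qm]$. Your mapping-back argument shows that separated sets of size $S$ in the blowup give separated sets of size at least $(S-kq)/q$ in the original: each cut vector splits at most one cluster per tree, and unsplit clusters meeting $X$ or $Y$ collapse to separated sets of original labels. That is an \emph{upper} bound on $f$ of the blowup. Apply it to a worst family on $[m]$ and add the $s$ leftover labels via Step~1. This gives
\[
f(r,k,qm+s)\le q\,f(r,k,m)+kq+s,\qquad\text{hence}\qquad \frac{f(r,k,n)}{n}\le\frac{f(r,k,m)}{m}+\frac{k}{m}+\frac{m}{n}.
\]
The lower bound you assert would have to hold for every family on $[n]$. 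As you observe yourself, a worst family on $[n]$ need not be a blowup.

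You do not need that lower bound. Run Step~3 with the $\liminf$ in place of the $\limsup$:
\begin{itemize}
\item Pick $m\ge k/\varepsilon$ with $f(r,k,m)/m\le\liminf_n f(r,k,n)/n+\varepsilon$. This is possible because the $\liminf$ is approached along arbitrarily large $m$.
\item Let $n\to\infty$ in the displayed inequality. This gives $\limsup_n f(r,k,n)/n\le\liminf_n f(r,k,n)/n+2\varepsilon$.
\end{itemize}
With this repair your argument is the paper's proof. The paper roots a bad family $\mathcal{T}$ on $[q]$ and subdivides the parent edge of each labeled vertex $d-1$ times with consecutive labels, where $d=\lfloor n/q\rfloor$. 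It hangs the leftover labels on a path, the analogue of your Step~1. It concludes $f(\mathcal{Q},n)\le d\,f(\mathcal{T},q)+O(d)$, i.e.\ it bounds the $\limsup$ by the $\liminf$.

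\textbf{The fallback does not rescue the direction you stated, at least as sketched.} A union bound in the style of Lemma~\ref{l:random} must cover all cut vectors. Each tree induces only $O(n)$ distinct partitions of $[n]$, even though the trees may have arbitrarily many unlabeled vertices. Still, there are $\Theta(n^k)$ cut vectors, while the deviation probability for a single one under a random $m$-subset is only $e^{-\Theta(\delta^2 m)}$. So you need $m=\Omega(k\log n/\delta^2)$, and you cannot fix $m$ near the $\limsup$ and let $n\to\infty$. Closing this needs either a chaining argument across scales, or an $n$-independent sampling bound such as a $\delta$-approximation theorem for the bounded-VC-dimension system of cut sides. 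Neither is supplied. Separately, ``merging labels into groups of about $n/m$'' has no evident meaning for an arbitrary family, since no grouping is contiguous in all $k$ trees simultaneously.

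\textbf{Minor point in Step~1.} If some $e_i$ lies on the added path, there is no attaching edge in $\mathcal{T}'$ to move it to. But then one of $X,Y$ lies inside $[n]\setminus[m]$, so $f(r,k,n)\le n-m$ and the inequality holds trivially.
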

\begin{proof}
	Let	$s(r,k) = \liminf_{n \rightarrow \infty} f(r,k,n)/n$.
	Let $\varepsilon > 0$. We must prove that for all $n$ sufficiently large,
	$f(r,k,n) \le (s(r,k)+\varepsilon)n$ as this will prove that
	$\limsup_{n \rightarrow \infty} f(r,k,n)/n \le s(r,k)+\varepsilon$ and hence the claimed limit exists.
	
	So, let $\varepsilon > 0$ be given. By the definition of $\liminf$, there exist infinitely many 
	positive integers $q$ where for each such $q$ there exists a family of trees $\T = \{T_1,\ldots,T_k\}$  each with a common subset of vertices labeled by $[q]$ and each with maximum degree at most $r$, such that $f(\T,q) \le (s(r,k)+\varepsilon/2)q$. Choose such a $q$ with $q \ge 4/\varepsilon$ and suppose than $n > q^2$.
	
	We next prove that the exists a family of trees $\Q = \{Q_1,\ldots,Q_k\}$ each with a common subset of vertices labeled by $[n]$ and each with maximum degree at most $r$, such that $f(\Q,n) \le (s(r,k)+\varepsilon)n$; in particular, $f(r,k,n) \le (s(r,k)+\varepsilon)n$, as required.  Construct $Q_i$ from $T_i$ as follows.
	Let $d=\lfloor n/q \rfloor$.
	Consider the rooting of $T_i$ at the vertex whose label is $q$, so now every vertex but the root has a unique parent.
	For each $t \in [q-1]$ consider the vertex of $T_i$ whose label is $t$, and the edge connecting it with its parent. Subdivide this edge $d-1$ times (this introduces $d-1$ vertices of degree $2$), assign the labels $td,td-1,\ldots,td-d+2$ to these new vertices, and relabel $t$ with the label $td-d+1$.
	This now defines a tree with the labels $1,\ldots (q-1)d$. As for the root (whose original label is
	$q$), relabel it to $qd-d+1$, attach a new path of length $n-(qd-d+1)$ to the root (this introduces
	$n-(qd-d+1)-1$ new vertices of degree $2$ and a new leaf), and label them with the labels
	$qd-d+2,\ldots,n$. Observe also that the number of vertices on this new path is
	$n-(qd-d+1)=n-1-d(q-1) \le n-1-(n/q-1)(q-1) \le d+q \le 2d$.
	Overall, we obtain the tree $Q_i$ which has a common subset of vertices labeled by $[n]$ and has maximum degree at most $r$. Also notice that each edge of $Q_i$ is either an original edge of $T_i$, or is an edge obtained by subdividing an edge $T_i$. As each such edge of $T_i$ is subdivided at most $d-1$
	times (except for the path attached to the root whose length is at most $2d$) and are consecutively labeled, this immediately implies that $f(\Q,n) \le d f(\T,q)+2d$. We therefore have
	$$
	f(\Q,n) \le d f(\T,q)+2d \le d(s(r,k)+\varepsilon/2)q + 2d \le n(s(r,k)+\varepsilon/2)+2d
	\le n(s(r,k)+\varepsilon)
	$$
	where in the last inequality we have used that $\varepsilon n/4 \ge d$ since $q \le 4/\varepsilon$.
\end{proof}

\item
Theorem \ref{t:upper} proves, in particular, that $f(r,3) \le (r-1)/r^3$. For $r=3$ (and $r=2$, see below) we know that this upper bound is tight. Thus, Conjecture \ref{conj:1} asserts that this bound is tight for all $r$. Notice that in order to prove Conjecture \ref{conj:1}, one might need to generalize the already rather technical Lemma \ref{l:lower-2}.

We can generalize our upper bound constructions from Theorem \ref{t:upper} to an arbitrary number of $k$ trees as follows. If $k$ is odd, we may take $k$ copies of $T_1$ (recall that when $k=3$ we have taken three copies of $T_1$), randomly and bijectively assigning labels from $[n]$ to the $n$ non-backbone vertices in each tree, independently. Using Lemma \ref{l:random} this construction yields that whp $f(\{T_1,\ldots,T_k\},n) \le
n(r-1)^{(k-1)/2}/r^k+o(n)$. Similarly, if $k$ is even, we may take $k-1$ copies of $T_1$ and one copy of $T_2$
(recall that in Theorem \ref{t:upper} we have taken $T_2$ as the balanced path blowup of $H_2$)
and randomly and independently assign labels to the non-backbone vertices in each tree. Using Lemma \ref{l:random} this construction yields that whp $f(\{T_1,\ldots,T_k\},n) \le
n(r-1)^{(k-2)/2}/2r^{k-1}+o(n)$. However, we have no idea if the respective values
($(r-1)^{(k-1)/2}/r^k$ for odd $k$ and $(r-1)^{(k-2)/2}/2r^{k-1}$ for even $k$) equal $f(r,k)$,
so we leave it as an open problem.

\item
As mentioned in the introduction, proving $f(2,k)=1/2^k$ is rather easy. For completeness, we provide the proof in the following proposition.
\begin{proposition}\label{prop:2k}
	$f(2,k)=1/2^k$.	
\end{proposition}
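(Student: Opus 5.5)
For the lower bound I will argue by induction on $k$, proving the slightly stronger statement that for $k$ paths with a common subset labeled by $[n]$ there is a cut vector separating two sets each of size at least $n/2^k - O(1)$. The key point is that for a path the median cut is far better than what Lemma \ref{l:simple} gives with $r=2$: after cutting at an edge, each side is again a path. So I will keep the separated sets as sets of labels, not as subpaths, and recurse on them.

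\textbf{Base case and inductive step.} For $k=1$, Lemma \ref{l:simple} with $r=2$ gives both parts of size at least $(n-1)/2$. Now suppose $X,Y$ are disjoint, $|X|,|Y| \ge m$, and they are separated by cuts $(e_1,\ldots,e_{k-1})$ in $T_1,\ldots,T_{k-1}$. In the path $T_k$, consider the common vertex set $X\cup Y$ and the ordering of these labels along the path. I will choose an edge $e_k$ together with an orientation $\b(k)$ so that each side receives a large portion of one of the two sets. Assign to each edge $e$ of $T_k$ the quantities $x(e)$ and $y(e)$, the numbers of elements of $X$ and of $Y$ to the left of $e$. Both are monotone in $e$.

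\textbf{Choosing $e_k$.} Pick $e$ where $x(e)$ first reaches $\lceil |X|/2\rceil$. If $y(e) \le |Y|/2$, then the left side contains at least $|X|/2$ of $X$ and the right side at least $|Y|/2$ of $Y$, so we obtain separated sets of sizes at least $m/2$ each. Otherwise $y(e) > |Y|/2$, so the left side contains at least $|Y|/2$ of $Y$, while the right side contains at least $|X|/2 - 1$ of $X$. In either case one side holds about half of one of the sets and the opposite side about half of the other, which gives separated sets of size at least $m/2 - 1$. Subsets of separated sets remain separated by the earlier cuts, so the new pair is separated by the full cut vector. Iterating, $f(2,k,n) \ge n/2^k - O(1)$, hence $f(2,k) \ge 1/2^k$.

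\textbf{Upper bound.} I will use Lemma \ref{l:random}. Take each $T_i$ to be a path on the $n$ labeled vertices, with the labels assigned independently and uniformly at random. A cut $e_i$ splits $[n]$ into sizes $\alpha_i n$ and $(1-\alpha_i)n$. By \eqref{e:1}, whp every choice of $\e,\b$ yields
\[
\min\{|X^{\e,\b}_0|,|X^{\e,\b}_1|\} \le n\min\Bigl\{\prod_i \alpha_i, \prod_i (1-\alpha_i)\Bigr\} + o(n),
\]
after renaming so that $\alpha_i$ denotes $|X_{i,\b(i)}|/n$. The minimum is at most the geometric mean $\prod_i \sqrt{\alpha_i(1-\alpha_i)} \le 2^{-k}$. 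This gives $f(2,k) \le 1/2^k$.

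The main obstacle is the bookkeeping in the inductive step, namely making sure that the halving argument really applies to the restricted sets $X$ and $Y$ and that the additive losses stay bounded. Both points look routine.
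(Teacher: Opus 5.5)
Your proof is correct and follows essentially the same route as the paper. For the lower bound you use the same inductive halving along each successive path, except that you cut at the median of one current set rather than at a prefix of size about $n/2^i$ of $A_i\cup B_i$, which gives an additive $O(1)$ loss instead of $o(n)$. For the upper bound you use the same random paths with Lemma \ref{l:random}, and your bound $\min\{\prod_i\alpha_i,\prod_i(1-\alpha_i)\}\le\prod_i\sqrt{\alpha_i(1-\alpha_i)}\le 2^{-k}$ makes explicit the step the paper calls easy.
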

\begin{proof}
	The lower bound is quite similar to the proof of Lemma 2.1 of \cite{AM-1986}.
	Suppose that $T_1, \ldots, T_k$ are paths on vertex set $[n]$. We may equivalently view $T_i$ as a permutation $\pi_i$ of $[n]$, so that
	$\pi_i(j)$ is the label of vertex $j$ on the path $T_i$.
	For $1 \le i \le k$, we construct a cut vector $(e_1,\ldots,e_i)$ 
	and disjoint sets of vertices $A_i$ and $B_i$ with $|A_i|, |B_i| \ge n(1-o_n(1))/2^i$ such that $A_i$ and $B_i$ are separated by $(e_1,\ldots,e_i)$.
	We proceed inductively. Let $e_1$ be the first edge on $T_1$ for which the size
	of the set of vertices $A_1$ appearing in $\pi_1$ before $e_1$ is $\lceil n/2 \rceil$. The remaining set of vertices (appearing after $e_1$ in $\pi_1$) is $B_1$, so we have
	$|B_1|=\lfloor n/2 \rfloor$, and $A_1,B_1$ are separated by $e_1$.
	Assume that we have already defined $A_i$, $B_i$
	having the claimed properties, in particular $|A_i|,|B_i| \ge n(1-o_n(1))/2^i$.
	Let $\sigma$ denote the restriction of $\pi_{i+1}$ to $A_i \cup B_i$,
	so $\sigma$ is a permutation of $A_i \cup B_i$ and $|A_i \cup B_i| \ge n(1-o_n(1))/2^{i-1}$.
	Let $C$ denote the smallest prefix of $\sigma$ for which $|C| \ge n/2^i$ and let $D$ be the remaining vertices of $\sigma$. Now, suppose first that $|A_i \cap C| \ge n/2^{i+1}$. Then we must have that $|B_i \cap D| \ge n/2^{i+1}-o(n)$
	so we may set $A_{i+1}=A_i \cap C$ and $B_{i+1}=B_i \cap D$.
	Otherwise, we must have $|A_i \cap D| \ge n/2^{i+1}-o(n)$. Then we must have that $|B_i \cap C| \ge n/2^{i+1}-o(n)$ so we may set $A_{i+1}=A_i \cap D$ and $B_{i+1}=B_i \cap C$.
	Finally, let $e_{i+1}$ be the first edge on $T_{i+1}$ which separates $A_{i+1}$ and $B_{i+1}$.
	Once we arrive at $i=k$, we have that $A_k$ and $B_k$ are separated by $(e_1,\ldots,e_k)$
	and each has size at least $n(1-o_n(1))/2^k$, proving that $f(2,k) \ge 1/2^k$.
	
	For the upper bound, we can just take $k$ random permutations of $[n]$ (and the corresponding $n$-vertex paths $T_1,\ldots,T_k$ they define) and use Lemma \ref{l:random} to easily obtain that whp
	$f(\{T_1,\ldots,T_k\},n) \le n/2^k+o(n)$, whence $f(2,k) \le 1/2^k$.
\end{proof}

\item
Another problem of interest in the phylogenetic tree setting is the following ''inverse''
question. Determine the smallest possible $n$, such that for any set of $k$ unrooted phylogenetic trees on
taxa set $[n]$, there is a quartet common to all of them. In other words, determine the smallest $n$
for which $g(k,n) \ge 2$. Denote this number by $h(k)$ and let $h^*(k)$ be analogously defined when
restricting the trees to be $n$-leaf caterpillars (which, recall, are an important family of unrooted phylogenetic trees). It is proved in \cite{BD-1986} (see also \cite{ASY-2014} Lemma 5.2), that $h(2)=h^*(2)=6$ (notice that Theorem \ref{t:taxa} only gives $h(2) \le 7$). It may be interesting to determine $h(k)$ in general.
It is not too difficult to prove that $h^*(k) = 2^k+2$. Indeed, for the lower bound, just mimic the lower bound proof of Proposition \ref{prop:2k} and observe that there is
no need for the $o_n(1)$ term in this case; hence any set of $k$ permutations of order $2^k+2$ separates two disjoint sets of two elements each, which corresponds to a common quartet in a set of $k$ caterpillars.
For the upper bound, constructing a family of $k$ caterpillars on $2^k+1$ leaves having no common quartet can be done recursively; we omit the details, but for example, if $k=3$ just consider the
three permutations (correspondingly, caterpillars) $1 2 3 4 5 6 7 9$, $1 2 7 6 5 4 3 8 9$, $1 8 3 6 5 4 7 2 9$. Determining $h(k)$ though, seems considerably more involved. However, while $h(2)=h^*(2)=6$, we already have $h(3) > h^*(3)=10$, as shown in Figure \ref{f:h3}.
\end{itemize}

\begin{figure}[!ht]
	\includegraphics[scale=0.6,trim=50 320 90 110, clip]{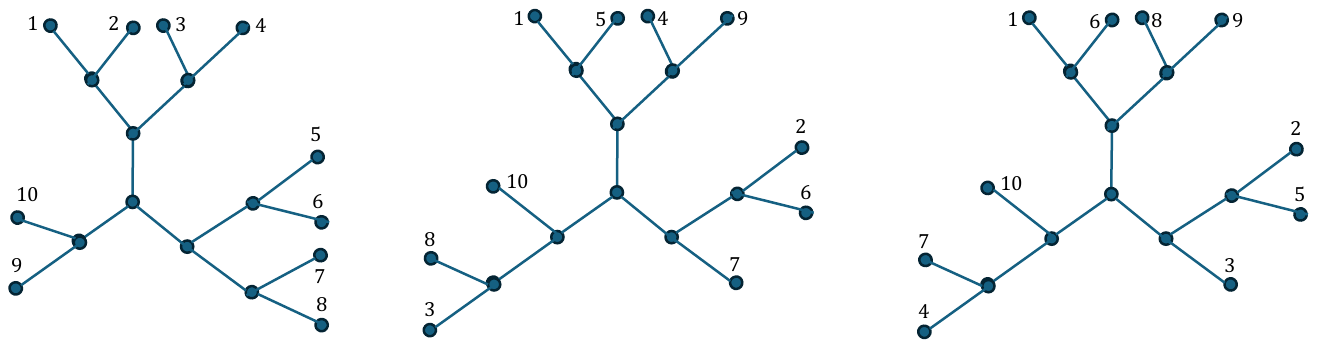}
	\caption{Three unrooted phylogenetic trees, each on taxa set $[10]$ with no common quartet (i.e., no
		disjoint taxa sets $X,Y$ of size $2$ each that are separated in all three trees).}
	\label{f:h3}
\end{figure}  

\bibliographystyle{abbrv} 
\bibliography{references} 


%
%

\end{document}